\theoremstyle{definition}
\newtheorem{thm}{Theorem}[section]
\newtheorem{cor}[thm]{Corollary}
\newtheorem{lem}[thm]{Lemma}
\newtheorem{rem}[thm]{Remark}
\newtheorem{prop}[thm]{Proposition}
\newtheorem{defn}[thm]{Definition}
\newtheorem{example}[thm]{Example}
\numberwithin{equation}{section}
\def\o{\otimes}
\def\N{{\mathbbm N}}
\def\Z{{\mathbbm Z}}
\def\Q{{\mathbbm Q}}
\def\F{{\mathbbm F}}
\def\1{{\mathbbm{1}}}
\newcommand{\END}{{\rm END}}
\def\dif{\partial}
\def\lra{{\longrightarrow}}
\def\gmod{{\mathrm{-gmod}}}   
\def\Id{\mathrm{Id}}
\def\mc{\mathcal}
\def\mf{\mathfrak}
\def\shuffle{\,\raise 1pt\hbox{$\scriptscriptstyle\cup{\mskip
               -4mu}\cup$}\,}
\newcommand{\refequal}[1]{\xy {\ar@{=}^{#1}
(-1,0)*{};(1,0)*{}};
\endxy}
\newcommand{\sym}{\mathrm{\Lambda}}
\newcommand{\pol}{\mathrm{Pol}}
\newcommand{\nh}{\mathrm{NH}}
\newcommand{\mH}{\mathrm{H}} 
\newcommand{\RHOM}{\mathbf{R}\mathrm{HOM}}
\newcommand{\cwbubble}[2]{
\begin{DGCpicture}
\DGCcoupon*(-0.4,-0.4)(.4,.4){ }
\DGCbubble(0,0){0.35}
\DGCdot*<{0.2,L}
\DGCdot*.{0.1,R}[r]{#2}
\DGCcoupon*(-.4,-.4)(.4,.4){\small{#1}}
\end{DGCpicture}
}
\newcommand{\ccwbubble}[2]{
\begin{DGCpicture}
\DGCcoupon*(-0.4,-0.4)(.4,.4){ }
\DGCbubble(0,0){0.35}
\DGCdot*>{0.2,L}
\DGCdot*.{0.1,R}[r]{#2}
\DGCcoupon*(-.4,-.4)(.4,.4){\small{#1}}
\end{DGCpicture}
}
\newcommand{\bigcwbubble}[2]{
\begin{DGCpicture}
\DGCcoupon*(-0.8,-0.8)(.8,.8){ }
\DGCbubble(0,0){0.5}
\DGCdot*<{0.25,L}
\DGCdot*.{0.25,R}[r]{#2}
\DGCcoupon*(-.6,-.6)(.6,.6){\small{#1}}
\end{DGCpicture}
}
\newcommand{\bigccwbubble}[2]{
\begin{DGCpicture}
\DGCcoupon*(-0.8,-0.8)(.8,.8){ }
\DGCbubble(0,0){0.5}
\DGCdot*>{0.25,L}
\DGCdot*.{0.25,R}[r]{#2}
\DGCcoupon*(-.6,-.6)(.6,.6){\small{#1}}
\end{DGCpicture}
}
\newcommand{\cwcapbubcup}[4]{
\begin{DGCpicture}
\DGCcoupon*(-0.3,-0.1)(1.3,2.1){ }
\ifstrequal{#4}{no}{}{
\DGCcoupon*(0,.8)(.3,1.4){#4}}
\ifstrequal{#1}{no}{}{
\DGCstrand(0,0)(1,0)/d/
\DGCdot*>{0.25,2}
\ifstrequal{#1}{$0$}{}{
\ifstrequal{#1}{$1$}{\DGCdot{0.25,1}}{
\DGCdot{0.25,1}[r]{\mbox{\scriptsize #1}}}}}
\ifstrequal{#3}{no}{}{
\DGCstrand/d/(0,2)(1,2)
\DGCdot*<{1.75,2}
\ifstrequal{#3}{$0$}{}{
\ifstrequal{#3}{$1$}{\DGCdot{1.75,1}}{
\DGCdot{1.75,1}[r]{\mbox{\scriptsize #3}}}}}
\ifstrequal{#2}{no}{}{
\DGCbubble(1,1){.3}
\DGCdot*>{1.2,L}
\DGCcoupon*(.65,.65)(1.35,1.35){\small{#2}}}
\end{DGCpicture}
}
\newcommand{\ccwcapbubcup}[4]{
\begin{DGCpicture}
\DGCcoupon*(-0.3,-0.1)(1.3,2.1){ }
\ifstrequal{#4}{no}{}{
\DGCcoupon*(0,.8)(.3,1.4){#4}}
\ifstrequal{#1}{no}{}{
\DGCstrand(0,0)(1,0)/d/
\DGCdot*<{0.25,2}
\ifstrequal{#1}{$0$}{}{
\ifstrequal{#1}{$1$}{\DGCdot{0.25,1}}{
\DGCdot{0.25,1}[r]{\mbox{\scriptsize #1}}}}}
\ifstrequal{#3}{no}{}{
\DGCstrand/d/(0,2)(1,2)
\DGCdot*>{1.75,2}
\ifstrequal{#3}{$0$}{}{
\ifstrequal{#3}{$1$}{\DGCdot{1.75,1}}{
\DGCdot{1.75,1}[r]{\mbox{\scriptsize #3}}}}}
\ifstrequal{#2}{no}{}{
\DGCbubble(1,1){.3}
\DGCdot*<{1.2,L}
\DGCcoupon*(.65,.65)(1.35,1.35){\small{#2}}}
\end{DGCpicture}
}
\newcommand{\RIII}[5]{
\begin{DGCpicture}[scale=0.85]
\DGCcoupon*(-0.3,-0.3)(2.3,2.3){}
\ifstrequal{#5}{no}{}{
\DGCcoupon*(2.1,.95)(2.3,1.15){#5}}
\DGCstrand(0,0)(2,2)
\DGCdot*>{2}
\ifstrequal{#4}{$0$}{}{
\ifstrequal{#4}{$1$}{\DGCdot{1.7}}{
\DGCdot{1.7}[r]{\mbox{\scriptsize #4}}}}
\DGCstrand(2,0)(0,2)
\DGCdot*>{2}
\ifstrequal{#2}{$0$}{}{
\ifstrequal{#2}{$1$}{\DGCdot{1.7}}{
\DGCdot{1.7}[r]{\mbox{\scriptsize #2}}}}
\ifstrequal{#1}{L}{\DGCstrand(1,0)(0,1)(1,2)}{\DGCstrand(1,0)(2,1)(1,2)}
\DGCdot*>{2}
\ifstrequal{#3}{$0$}{}{
\ifstrequal{#3}{$1$}{\DGCdot{1.7}}{
\DGCdot{1.7}[r]{\mbox{\scriptsize #3}}}}
\end{DGCpicture}
}
\newcommand{\twolines}[3]{
\begin{DGCpicture}
\DGCcoupon*(-.3,-.3)(1.3,1.3){}
\ifstrequal{#3}{no}{}{\DGCcoupon*(1.1,.6)(1.4,.9){#3}}
\DGCstrand(0,0)(0,1)
\DGCdot*>{1}
\ifstrequal{#1}{$0$}{}{
\ifstrequal{#1}{$1$}{\DGCdot{.4}}{
\DGCdot{.4}[r]{\mbox{\scriptsize #1}}}}
\DGCstrand(1,0)(1,1)
\DGCdot*>{1}
\ifstrequal{#2}{$0$}{}{
\ifstrequal{#2}{$1$}{\DGCdot{.4}}{
\DGCdot{.4}[r]{\mbox{\scriptsize #2}}}}
\end{DGCpicture}
}
\newcommand{\twolinesD}[3]{
\begin{DGCpicture}
\DGCcoupon*(-.3,-.3)(1.3,1.3){}
\ifstrequal{#3}{no}{}{\DGCcoupon*(-.4,.6)(-.1,.9){#3}}
\DGCstrand(0,0)(0,1)
\DGCdot*<{0}
\ifstrequal{#1}{$0$}{}{
\ifstrequal{#1}{$1$}{\DGCdot{.6}}{
\DGCdot{.6}[r]{\mbox{\scriptsize #1}}}}
\DGCstrand(1,0)(1,1)
\DGCdot*<{0}
\ifstrequal{#2}{$0$}{}{
\ifstrequal{#2}{$1$}{\DGCdot{.6}}{
\DGCdot{.6}[r]{\mbox{\scriptsize #2}}}}
\end{DGCpicture}
}
\newcommand{\crossing}[5]{
\begin{DGCpicture}
\DGCcoupon*(-.3,-.3)(1.3,1.3){}
\ifstrequal{#5}{no}{}{
\DGCcoupon*(1,.4)(1.3,.7){#5}}
\DGCstrand(0,0)(1,1)
\DGCdot*>{1}
\ifstrequal{#2}{$0$}{}{
\ifstrequal{#2}{$1$}{\DGCdot{.7}}{
\DGCdot{.7}[r]{\mbox{\scriptsize #2}}}}
\ifstrequal{#3}{$0$}{}{
\ifstrequal{#3}{$1$}{\DGCdot{.3}}{
\DGCdot{.3}[r]{\mbox{\scriptsize #3}}}}
\DGCstrand(1,0)(0,1)
\DGCdot*>{1}
\ifstrequal{#1}{$0$}{}{
\ifstrequal{#1}{$1$}{\DGCdot{.7}}{
\DGCdot{.7}[r]{\mbox{\scriptsize #1}}}}
\ifstrequal{#4}{$0$}{}{
\ifstrequal{#4}{$1$}{\DGCdot{.3}}{
\DGCdot{.3}[r]{\mbox{\scriptsize #4}}}}
\end{DGCpicture}
}
\newcommand{\crossingD}[5]{
\begin{DGCpicture}
\DGCcoupon*(-.3,-.3)(1.3,1.3){}
\ifstrequal{#5}{no}{}{\DGCcoupon*(-.3,.4)(0,.7){#5}}
\DGCstrand(0,0)(1,1)
\DGCdot*<{0}
\ifstrequal{#2}{$0$}{}{
\ifstrequal{#2}{$1$}{\DGCdot{.3}}{
\DGCdot{.3}[r]{\mbox{\scriptsize #2}}}}
\ifstrequal{#3}{$0$}{}{
\ifstrequal{#3}{$1$}{\DGCdot{.7}}{
\DGCdot{.7}[r]{\mbox{\scriptsize #3}}}}
\DGCstrand(1,0)(0,1)
\DGCdot*<{0}
\ifstrequal{#1}{$0$}{}{
\ifstrequal{#1}{$1$}{\DGCdot{.3}}{
\DGCdot{.3}[r]{\mbox{\scriptsize #1}}}}
\ifstrequal{#4}{$0$}{}{
\ifstrequal{#4}{$1$}{\DGCdot{.7}}{
\DGCdot{.7}[r]{\mbox{\scriptsize #4}}}}
\end{DGCpicture}
}
\newcommand{\oneline}[2]{
\begin{DGCpicture}
\DGCcoupon*(-.3,-.1)(0.3,2.1){}
\DGCstrand(0,0)(0,2)
\DGCdot*>{2}
\ifstrequal{#2}{no}{}{\DGCcoupon*(.1,1.4)(.4,1.7){#2}}
\ifstrequal{#1}{$0$}{}{
\ifstrequal{#1}{$1$}{\DGCdot{1}}{
\DGCdot{1}[r]{\mbox{\scriptsize #1}}}}
\end{DGCpicture}
}
\newcommand{\onelineD}[2]{
\begin{DGCpicture}
\DGCcoupon*(-.3,-.1)(0.3,2.1){}
\DGCstrand(0,0)(0,2)
\DGCdot*<{0}
\ifstrequal{#2}{no}{}{\DGCcoupon*(-.4,1.4)(-.1,1.7){#2}}
\ifstrequal{#1}{$0$}{}{
\ifstrequal{#1}{$1$}{\DGCdot{1}}{
\DGCdot{1}[r]{\mbox{\scriptsize #1}}}}
\end{DGCpicture}
}
\newcommand{\onelineshort}[2]{
\begin{DGCpicture}
\DGCcoupon*(-.3,-.1)(0.3,1.1){}
\DGCstrand(0,0)(0,1)
\DGCdot*>{1}
\ifstrequal{#2}{no}{}{\DGCcoupon*(.1,.7)(.4,1){#2}}
\ifstrequal{#1}{$0$}{}{
\ifstrequal{#1}{$1$}{\DGCdot{.5}}{
\DGCdot{.5}[r]{\mbox{\scriptsize #1}}}}
\end{DGCpicture}
}
\newcommand{\onelineDshort}[2]{
\begin{DGCpicture}
\DGCcoupon*(-.3,-.1)(0.3,1.1){}
\DGCstrand(0,0)(0,1)
\DGCdot*<{0}
\ifstrequal{#2}{no}{}{\DGCcoupon*(-.4,.7)(-.1,1){#2}}
\ifstrequal{#1}{$0$}{}{
\ifstrequal{#1}{$1$}{\DGCdot{.5}}{
\DGCdot{.5}[r]{\mbox{\scriptsize #1}}}}
\end{DGCpicture}
}
\newcommand{\curl}[5]{
\begin{DGCpicture}
\ifstrequal{#1}{L}{
\DGCcoupon*(-2,-.8)(.3,1.8){}
\DGCstrand(0,-.5)(0,.25)/u/(-1.5,.5)/d/(0,.75)/u/(0,1.5)
\ifstrequal{#4}{no}{}{\DGCcoupon*(-1.4,0)(-.4,1){\small{#4}}}
\ifstrequal{#5}{$0$}{}{\ifstrequal{#5}{$1$}{\DGCdot{.5,4}}{\DGCdot{.5,4}[l]{\mbox{\scriptsize #5}}}}
\ifstrequal{#3}{no}{}{\DGCcoupon*(-1.2,1.1)(-.9,1.4){#3}}
}{
\DGCcoupon*(-.3,-.8)(2,1.8){}
\DGCstrand(0,-.5)(0,.25)/u/(1.5,.5)/d/(0,.75)/u/(0,1.5)
\ifstrequal{#4}{no}{}{\DGCcoupon*(.4,0)(1.4,1){\small{#4}}}
\ifstrequal{#5}{$0$}{}{\ifstrequal{#5}{$1$}{\DGCdot{.5,4}}{\DGCdot{.5,4}[r]{\mbox{\scriptsize #5}}}}
\ifstrequal{#3}{no}{}{\DGCcoupon*(.9,1.1)(1.2,1.4){#3}}
}
\ifstrequal{#2}{D}{\DGCdot*<{-0.25} \DGCdot*<{1.25} \DGCdot*<{.75,2}}{\DGCdot*>{-0.25} \DGCdot*>{1.25} \DGCdot*>{.75,2}}
\end{DGCpicture}
}
\newcommand{\cappy}[4]{
\begin{DGCpicture}
\DGCcoupon*(-.3,-.1)(1.3,.8){}
\DGCstrand(0,0)(1,0)/d/
\ifstrequal{#1}{CCW}{\DGCdot*<{0,2}}{\DGCdot*>{0,1}}
\ifstrequal{#2}{$0$}{}{\ifstrequal{#2}{$1$}{\DGCdot{.3}}{\DGCdot{.3}[r]{\mbox{\scriptsize #2}}}}
\ifstrequal{#3}{no}{}{
\DGCbubble(1.7,.4){0.3}
\ifstrequal{#3}{CCW}{\DGCdot*>{.7,L}}{\DGCdot*<{.7,L}}
\DGCcoupon*(1.35,0)(2.05,.8){\small{$1$}}}
\ifstrequal{#4}{no}{}{\DGCcoupon*(.8,.5)(1.2,.8){#4}}
\end{DGCpicture}
}
\newcommand{\cuppy}[4]{
\begin{DGCpicture}
\DGCcoupon*(-.3,0.2)(1.3,1.1){}
\DGCstrand(0,1)/d/(1,1)/u/
\ifstrequal{#1}{CCW}{\DGCdot*>{1,1}}{\DGCdot*<{1,2}}
\ifstrequal{#2}{$0$}{}{\ifstrequal{#2}{$1$}{\DGCdot{.7}}{\DGCdot{.7}[r]{\mbox{\scriptsize #2}}}}
\ifstrequal{#3}{no}{}{
\DGCbubble(1.7,.6){0.3}
\ifstrequal{#3}{CCW}{\DGCdot*>{.9,L}}{\DGCdot*<{.9,L}}
\DGCcoupon*(1.35,.2)(2.05,1){\small{$1$}}}
\ifstrequal{#4}{no}{}{\DGCcoupon*(.8,.2)(1.2,.5){#4}}
\end{DGCpicture}
}
\newcommand{\bottomcurl}[5]{
\begin{DGCpicture}
\DGCcoupon*(-.3,-.1)(1.3,1.6){}
\DGCstrand(0,0)(1,1)/u/(0,1)/d/(1,0)/d/
\ifstrequal{#1}{CW}{\DGCdot*>{1.3}}{\DGCdot*<{1.3}}
\ifstrequal{#2}{no}{}{\DGCcoupon*(0,0.65)(1,1.3){\mbox{\scriptsize #2}}}
\ifstrequal{#3}{yes}{\DGCdot{.3,2}}{}
\ifstrequal{#4}{no}{}{
\DGCbubble(1.5,0.6){.3}
\ifstrequal{#4}{CCW}{\DGCdot*>{.8,L}}{\DGCdot*<{.8,L}}
\DGCcoupon*(1.3,0.4)(1.7,0.8){\small{$1$}}}
\ifstrequal{#5}{no}{}{\DGCcoupon*(1.1,0.9)(1.5,1.5){#5}}
\end{DGCpicture}
}
\newcommand{\crossingR}[4]{
\begin{DGCpicture}
\DGCcoupon*(-.3,-.3)(1.3,1.3){}
\DGCstrand(0,0)(1,1)
\DGCdot*>{1}
\ifstrequal{#1}{no}{}{\DGCdot{.65}}
\DGCstrand(1,0)(0,1)
\DGCdot*<{0}
\ifstrequal{#2}{no}{}{\DGCdot{.65}}
\ifstrequal{#3}{no}{}{
\DGCbubble(1.2,.5){0.3}
\DGCdot*<{.7,R}
\DGCcoupon*(0.9,0.2)(1.5,.8){\small{$1$}}}
\ifstrequal{#4}{no}{}{
\DGCcoupon*(-.3,.2)(.3,.8){#4}}
\end{DGCpicture}
}
\newcommand{\crossingL}[4]{
\begin{DGCpicture}
\DGCcoupon*(-.3,-.3)(1.3,1.3){}
\DGCstrand(0,0)(1,1)
\DGCdot*<{0}
\ifstrequal{#1}{no}{}{\DGCdot{.35}}
\DGCstrand(1,0)(0,1)
\DGCdot*>{1}
\ifstrequal{#2}{no}{}{\DGCdot{.35}}
\ifstrequal{#3}{no}{}{
\DGCbubble(-.2,.5){0.3}
\DGCdot*<{.7,L}
\DGCcoupon*(-.5,0.2)(.1,.8){\small{$1$}}}
\ifstrequal{#4}{no}{}{
\DGCcoupon*(.7,.2)(1.3,.8){#4}}
\end{DGCpicture}
}
\begin{document}

\title{Categorification at prime roots of unity and hopfological finiteness}

\author{You Qi}
\address{Department of Mathematics, Yale University, New Haven, CT 06511, USA}
\email{you.qi@yale.edu}
\thanks{The authors are very grateful to Mikhail Khovanov for his support and encouragement, as well as sharing with us his insights on the subject.
The authors would also like to thank Catharina Stroppel for helpful discussions about categorifying Jones-Wenzl projectors.}

\author{Joshua Sussan}
\address{Department of Mathematics, CUNY Medgar Evers, Brooklyn, NY, 11225, USA}
\email{jsussan@mec.cuny.edu}
\thanks{J.S. was supported by NSF grant DMS-1407394, PSC-CUNY Award 67144-00 45, and an Alfred P. Sloan
Foundation CUNY Junior Faculty Award.}

\subjclass[2010]{Primary 81R50; Secondary 16E20, 16E35}
\date{\today}

\dedicatory{Dedicated to Christian Blanchet on the occasion of his sixtieth birthday}

\keywords{Categorification, hopfological algebra, prime roots of unity}

\begin{abstract}
We survey some recent results in hopfological algebra and the program of categorification at prime roots of unity. A categorical Jones-Wenzl projector at prime roots of unity is studied, and it is shown that this projector is hopfologically finite in special characteristics, while generically it is infinite.
\end{abstract}

\maketitle

\section{Introduction}
The Witten-Reshetikhin-Turaev (WRT) invariants \cite{Witten, RT} of links and tangles can be constructed from quantum groups at generic values of $q$. For the purposes of this note, we will only focus on the $\mathfrak{sl}_2$-case. When $q$ equals a root of unity, these invariants specialized at $q$ naturally arise, from a quantum physical point of view, as invariants assigned to certain decorated $3$-spheres using the $3$d-WRT topological quantum field theory (TQFT) $Z_n$. Here the 3d TQFT $Z_n$, parameterized by the order of $q$ (which is related to the level of the TQFT), admits representation theoretical explanations from quantum groups at roots of unity \cite{RT}.

The goal of the categorification program envisioned by Crane and Frenkel~\cite{CF} was to lift the WRT $3$-manifold invariant $Z_n(M^3)$, whose values live in the ring of cyclotomic integers $\mathbb{Z}[\zeta_n]$ (see, for instance, \cite{BeChLe}), to a homological invariant.
If this hypothetical homological structure were truly a functorial theory, then one should obtain invariants of $4$-manifolds with boundaries, which are viewed as cobordisms between $3$-manifolds. This would in turn lift $Z_n$ to a $4$-dimensional theory, as conjectured in \cite{CF}.

Strong evidence of the plausibility of Crane and Frenkel's proposal is provided by Khovanov homology \cite{KhJones}, which assigns a functorial homological invariant to any (framed) tangle. Since this ground-breaking work, there has been significant progress towards categorifying link invariants at generic $q$-values. We refer the reader to the work of Webster \cite{Webcombined} (and the references therein) for more historical accounts, as well as a striking general construction of homological WRT tangle invariants.

In 2004, Khovanov proposed the first steps towards categorification at roots of unity ~\cite{Hopforoots}.  He introduced the subject of \emph{hopfological algebra}, which combines theory of Hopf algebras with homological algebra.  The key observation of \cite{Hopforoots} is that, to categorify the ring of cyclotomic integers at a prime root of unity, one could utilize Mayer's homotopy category of $p$-complexes $\Bbbk[\dif]/(\dif^p)\!-\!\underline{\mathrm{gmod}}$~\cite{Mayer1,Mayer2} over a field of characteristic $p>0$:
\begin{equation}\label{eqn-categorifying-Op}
K_0(\Bbbk[\dif]/(\dif^p)\!-\!\underline{\mathrm{gmod}})\cong \Z[\zeta_p].
\end{equation}
Mayer's $p$-complexes can be understood as a special case of hopfological algebra, in the sense that it is the hopfological theory of the ground field $\Bbbk$ associated with the graded Hopf algebra $H=\Bbbk[\dif]/(\dif^p)$, where $\mathrm{char}(\Bbbk)=p$ and $\mathrm{deg}(\dif)=1$. A characteristic-zero analogue of Mayer's construction, known as $n$-complexes, has been considered by Kapranov \cite{Ka} and Sakaria \cite{SarNotes}. This subject is, roughly speaking, a hopfological theory attached to the Taft algebra $H_n$ at an $n$th root of unity. However, due to the lack of a braiding functor on $H_n\!-\!\mathrm{mod}$, it is relatively difficult to implement this theory in the categorification program. It remains an interesting open problem to devise a monoidal categorification of a general cyclotomic integer ring\footnote{See the recent preprint \cite{Mirmohades} for an interesting new case.}, and apply such a construction in categorification of the WRT 3d TQFT.

In order to apply the ``categorical cyclotomic integers" $\Bbbk[\dif]/(\dif^p)\!-\!\underline{\mathrm{gmod}}$ to categorification, one needs a good supply of algebra objects in the monoidal category $\Bbbk[\dif]/(\dif^p)\!-\!\underline{\mathrm{gmod}}$. Such algebra objects arise naturally from $p$-differential graded ($p$-DG) algebras, an analogue of the usual DG-algebra over graded (super)vector spaces. Some basic technical machinery of hopfological algebra, and in particular, $p$-DG algebra, has been developed in~\cite{QYHopf}. The theory parallels that of the usual DG theory, and in fact the usual DG theory is a special case of hopfological algebra for the Hopf superalgebra $\Bbbk[d]/(d^2)$. For instance, one has the notion of the abelian category\footnote{These notions will be recalled in Section~\ref{sec-zig-zag}.} of $p$-DG modules over a $p$-DG algebra $A$, its homotopy category of $p$-DG modules $\mathcal{H}(A)$, and the derived category $\mathcal{D}(A)$ obtained from $\mathcal{H}(A)$ under localization at quasi-isomorphisms. Both categories $\mathcal{H}(A)$ and $\mathcal{D}(A)$ afford categorical module structures over $H\!-\!\underline{\mathrm{gmod}}$ under tensor product by $p$-complexes. Taking Grothendieck groups in the appropriate sense, the decategorification realizes $K_0(\mathcal{D}(A))$ (or $K_0(\mathcal{H}(A))$, but the former is more relevant in our story) as modules over $\Z[\zeta_p]$:
\begin{equation}\label{eqn-categorical-Op-mod}
\begin{gathered}
\xymatrix{
\Bbbk[\dif]/(\dif^p)\!-\!\underline{\mathrm{gmod}} \times \mathcal{D}(A) \ar[rr]^-{\otimes} \ar@2{->}[dd]^{K_0} && \mathcal{D}(A)\ar@2{->}[dd]^{K_0}\\
&& \\
K_0(\Bbbk[\dif]/(\dif^p)\!-\!\underline{\mathrm{gmod}}) \times K_0(\mc{D}(A)) \ar[rr]^-{\times}  && K_0(\mathcal{D}(A)).
}
\end{gathered}
\end{equation}
One can also easily check that the usual associativity and distributivity axioms of a module over $\Z[\zeta_p]$ are lifted to the corresponding properties of direct sums and tensor products in the triangulated categories.

In this note we will sketch out what interesting $p$-DG algebras have been investigated in earlier works. A natural way to find nontrivial $p$-DG algebras that may be relevant to quantum topology is to combine hopfological algebra of $p$-differentials with the machineries that have successfully lifted integral quantum structures (for generic $q$). It appears to us that, so far, building on the beautiful works of \cite{KL1,KL2,KL3,Lau1,Rou2,Webcombined} etc., working along this direction seems promising.  A summary of what is already known will be carried out in the next few sections, and at the end we will point out one new interesting feature of a hopfological finiteness property and its dependence on the characteristic of the ground field.

In Section~\ref{sec-small-qgroup} we review quantum $\mathfrak{sl}_2$ at roots of unity and related structures. The results are by now classical, and are included to fix notation for the rest of the paper.

Starting from Section~\ref{sec-zig-zag}, we recall the definition of a $p$-DG algebra and some basic hopfological properties. Following \cite{QiSussan}, we illustrate the definitions with a simple example $A_n^!$, whose derived category carries a categorical braid group action. Via this example, we hope the reader is convinced that hopfological algebra is indeed connected to quantum topology at roots of unity.

Then we proceed to more representation theoretical aspects of the story. In Section \ref{sec-cat-sl(2)}, we review the $p$-DG nilHecke algebra defined by Khovanov and Qi ~\cite{KQ} used to categorify the lower half of quantum $\mathfrak{sl}_2$ at prime roots of unity. This is further extended by Elias and Qi~\cite{EQ1} to categorify the full small quantum group, following Lauda's \cite{Lau1} work on the generic case. A categorification of the Beilinson-Lusztig-MacPherson (BLM) form of quantum $\mathfrak{sl}_2$ at prime roots of unity has been constructed in \cite{EQ2} by equipping the thick calculus of Khovanov-Lauda-Mackaay-Sto\v{s}i\'{c} \cite{KLMS} with a compatible $p$-differential.

A categorified Jones-Wenzl projector is constructed in Section \ref{sec-jones-wenzl}.  When equipped with $p$-differentials, we find some interesting new phenomenon. In certain characteristics, this functor is quasi-isomorphic to a finite complex, although for almost all primes it is an infinite complex unbounded in one direction. This suggests to us that the homology of the colored unknot may be finite only in special prime characteristics. We hope to investigate this phenomenon more carefully in upcoming works.

Finally, we should mention a recent perspective on variants of WRT TQFTs. The original WRT TQFTs build upon semisimplification of certain tensor subcategories of representation theory of quantum groups at roots of unity. A very recent construction of Blanchet, Costantino, Geer and Patureau-Mirand~\cite{BCGM} utilizes only projective objects in the representation category to define a non semisimple TQFT. It is an exciting problem to categorify their story at a prime root of unity.

\section{The small quantum group}\label{sec-small-qgroup}
\subsection{The small quantum group.}
Let $ \zeta_{2l} $ be a primitive $2l$th root of unity.  The quantum group $ u_{\Q[\zeta_{2l}]}(\mathfrak{sl}_2) $, which we will denote simply by $ u_{\Q[\zeta_{2l}]} $, is the $\Q[\zeta_{2l}]$-algebra generated by $ E, F, K^{\pm 1} $ subject to relations:
\begin{enumerate}
\item $ KK^{-1} = K^{-1}K=1$,
\item $ K^{\pm 1}E = \zeta_{2l}^{\pm 2} EK^{\pm 1} $, \quad $ K^{\pm 1} F = \zeta_{2l}^{\mp 2} FK^{\pm 1} $,
\item $ EF-FE = \frac{K-K^{-1}}{\zeta_{2l}-\zeta_{2l}^{-1}} $,
\item $ E^l = F^l = 0 $.
\end{enumerate}

The quantum group is a Hopf algebra whose comultiplication map 
$$ \Delta \colon u_{\Q[\zeta_{2l}]} \longrightarrow u_{\Q[\zeta_{2l}]} \otimes_{\Q[\zeta_{2l}]} u_{\Q[\zeta_{2l}]} $$
is given on generators by
\begin{equation*}
\Delta(E) = E \otimes 1 + K \otimes E, \quad\quad \Delta(F) = 1 \otimes F + F \otimes K^{-1}, \quad\quad \Delta(K^{\pm 1}) = K^{\pm 1} \otimes K^{\pm 1}.
\end{equation*}

For the purpose of categorification, it is more convenient to use the idempotented quantum group $ \dot{u}_{\Q[\zeta_{2l}]}(\mathfrak{sl}_2) $, which we will denote simply by $ \dot{u}_{\Q[\zeta_{2l}]} $. It is a non-unital $\Q[\zeta_{2l}]$-algebra generated by $ E, F $ and idempotents $1_\lambda$ ($\lambda \in \Z$), subject to relations:
\begin{enumerate}
\item $ 1_\lambda 1_\mu=\delta_{\lambda, \mu}1_\lambda$,
\item $ E1_\lambda=1_{\lambda+2}E $, \quad $ F1_{\lambda+2} = 1_{\lambda} F $,
\item $ EF1_{\lambda}-FE1_{\lambda} = [\lambda]1_\lambda $,
\item $ E^l = F^l = 0 $.
\end{enumerate}
Here $ [\lambda] =\sum_{i=0}^{|\lambda|-1} \zeta_{2l}^{1-|\lambda|+2i} $ is the quantum integer specialized at $\zeta_{2l}$.

The quantum group $\dot{u}_{\Q[\zeta_{2l}]}$ has an integral lattice subalgebra which we now recall. For any integer $n\in \{0,1,\dots, l-1\}$, let $ E^{(n)} = \frac{E^n}{[n]!} $, and $ F^{(n)} = \frac{F^n}{[n]!} $.
The elements $ E^{(n)}$, $F^{(n)}$ ($0\leq n \leq l-1$), and $ 1_{\lambda}$ ($\lambda\in \Z$) generate an algebra over the ring of cyclotomic integers $ \mathcal{O}_{2l} = \mathbb{Z}[\zeta_{2l}] $. Denote this integral form by $ \dot{u}_{\zeta_{2l}} $.

Now let $ l=p $ be prime.  Introduce the auxiliary ring
\begin{equation}\label{eqn-aux-ring}
\mathbb{O}_p = \mathbb{Z}[q]/(\Psi_p(q^2)),
\end{equation}  where $ \Psi_p(q) $ is the $p$-th cyclotomic polynomial.
We can define in a similar fashion an integral form  $ {u}_{\mathbb{O}_p} $ and its dotted version $ \dot{u}_{\mathbb{O}_p} $ for the small quantum $\mf{sl}_2$ over $ \mathbb{O}_p $.  For more details see \cite[Section 3.3]{KQ}. Let the lower half of $ u_{\mathbb{O}_p} $ be the subalgebra generated by the $ F^{(n)} $ ($0\leq n \leq p-1$) and denote it by $ u_{\mathbb{O}_p}^- $. Likewise, write the upper half as $u_{\mathbb{O}_p}^+$.

Let $ {V}_n $ be the unique (up to isomorphism) irreducible module for $ \dot{u}_{\mathbb{O}_p} $ of rank $ n+1 $.  It has a basis $ \lbrace v_0, v_1, \ldots, v_{n} \rbrace $ such that
\begin{equation}
\label{irreddef}
 F 1_\lambda v_i =\delta_{\lambda, -n+2i}[n-i+1] v_{i-1}\quad\quad E1_{\lambda} v_i = \delta_{\lambda, -n+2i}[i+1] v_{i+1}.
\end{equation}

On $ V_1^{\otimes n} $ there is an action of $ {u}_{\mathbb{O}_p} $ given via the comultiplication map $ \Delta $.  There is also a commuting action of the braid group that factors through the Hecke algebra.  We mostly consider the second lowest weight space which we denote by
$ V_1^{\otimes n}[-n+2] $.  This is spanned by vectors $ v_{i_1} \otimes \cdots \otimes v_{i_n} $ where $ i_r \in \lbrace 0, 1 \rbrace $ and exactly one $ i_r$ is $ 1 $.
More generally $ V_1^{\otimes n}[-n+2k] $ is spanned by
vectors $ v_{i_1} \otimes \cdots \otimes v_{i_n} $ where $ i_r \in \lbrace 0, 1 \rbrace $ and $k$ of the $ i_r$ are $ 1 $.
It is convenient to use another basis of $ V_1^{\otimes n}[-n+2] $ which is spanned by vectors $ l_r$ where
\begin{equation*}
l_r =
\begin{cases}
v_{0}^{\otimes (r-1)} \otimes (v_1 \otimes v_0-qv_0\o v_1) \otimes v_{0}^{\otimes (n-r-1)}  &  r \neq n \\
v_{0}^{\otimes (n-1)} \otimes v_1 & r=n.
\end{cases}
\end{equation*}

\subsection{The Temperley-Lieb algebra.}
The \emph{Temperley-Lieb algebra $TL_n$} is the $\mathbb{O}_p$-algebra generated by elements $U_i$ for $i=1,\ldots,n-1$ subject to relations
\begin{enumerate}
\item $U_i^2=-(q+q^{-1})U_i$,
\item $U_i U_j= U_j U_i$ for $|i-j|>1$,
\item $ U_i U_{j} U_i = U_i $ for $ |i-j|=1$.
\end{enumerate}
Define an operator $\tilde{U}$ on $ V_1^{\otimes 2}$ by
\begin{align*}
\tilde{U}(v_0 \otimes v_0) &= 0 \\
\tilde{U}(v_1 \otimes v_1) &= 0 \\
\tilde{U}(v_0 \otimes v_1) &= v_1 \otimes v_0 - q v_0 \otimes v_1 \\
\tilde{U}(v_1 \otimes v_0) &= -q^{-1}(v_1 \otimes v_0 - q v_0 \otimes v_1).
\end{align*}
Then there is an action of the Temperley-Lieb algebra on $V_1^{\otimes n}$ where the generator $ {U}_i \colon V_1^{\otimes n} \rightarrow V_1^{\otimes n} $ acts by
\begin{equation*}
U_i = \Id^{\otimes (i-1)} \otimes \tilde{U} \otimes \Id^{\otimes (n-i-1)}.
\end{equation*}

In terms of the basis $ \{l_1, \ldots, l_n \} $ the action of the Temperley-Lieb algebra is given by
\begin{equation}
\label{templiebdualcan1}
U_i(l_j) =
\begin{cases}
-(q+q^{-1}) l_i & \text{ if } i=j \\
l_i & \text{ if } |i-j|=1 \\
0 & \text{ if } |i-j|>1.
\end{cases}
\end{equation}

\subsection{The braid group.}
The braid group $B_n$ is generated by elements $ t_i $ for $i =1, \ldots, n-1 $ subject to the relations that
\begin{enumerate}
\item $t_i t_j= t_j t_i$ if $|i-j|>1$,
\item $ t_i t_{i +1} t_i = t_{i+1} t_i t_{i+1} $ for $ i=1,\ldots,n-2$.
\end{enumerate}
For $i=1,\ldots, n-1$, it will be convenient to introduce the inverse of $t_i$ and denote it by $t_i^\prime$.

Define an operator $ \tilde{t} $ on $ V_1^{\otimes 2} $ by
\begin{align*}
\tilde{t}(v_0 \otimes v_0) &= v_0 \otimes v_0 \\
\tilde{t}(v_1 \otimes v_1) &= v_1 \otimes v_1 \\
\tilde{t}(v_0 \otimes v_1) &= q v_1 \otimes v_0 +(1-q^2) v_0 \otimes v_1 \\
\tilde{t}(v_1 \otimes v_0) &= q v_0 \otimes v_1.
\end{align*}
Then there is an action of the braid group generator $ {t}_i \colon V_1^{\otimes n} \rightarrow V_1^{\otimes n} $ given by
\begin{equation*}
t_i = \Id^{\otimes (i-1)} \otimes \tilde{t} \otimes \Id^{\otimes (n-i-1)}.
\end{equation*}
Define an operator $ \tilde{t}' $ on $ V_1^{\otimes 2} $ by
\begin{align*}
\tilde{t}'(v_0 \otimes v_0) &= v_0 \otimes v_0 \\
\tilde{t}'(v_1 \otimes v_1) &= v_1 \otimes v_1 \\
\tilde{t}'(v_0 \otimes v_1) &= q^{-1} v_1 \otimes v_0 \\
\tilde{t}'(v_1 \otimes v_0) &=  (1-q^{-2})v_1 \otimes v_0 + q^{-1} v_0 \otimes v_1.
\end{align*}
Then there is an action of the braid group generator $ {t}_i' \colon V_1^{\otimes n} \rightarrow V_1^{\otimes n} $ given by
\begin{equation*}
t_i' = \Id^{\otimes (i-1)} \otimes \tilde{t}' \otimes \Id^{\otimes (n-i-1)}.
\end{equation*}
It is easy to see that these operators preserve each weight space of $V_1^{\otimes n} $ and it is straightforward to check that they do indeed satisfy the braid relations.
The action of $ B_n $ on $ V_1^{\otimes n}[-n+2] $ is the Burau representation.

In terms of the basis $ \{l_1, \ldots, l_n \} $ the action of the braid group is given by
\begin{equation}
\label{twistingdualcan1}
t_i(l_j) =
\begin{cases}
-q^2 l_i & \text{ if } i=j \\
q l_i + l_j & \text{ if } |i-j|=1 \\
l_j & \text{ if } |i-j|>1
\end{cases}
\end{equation}
\begin{equation}
\label{twistingdualcan2}
t_i'(l_j) =
\begin{cases}
-q^{-2} l_i & \text{ if } i=j \\
q^{-1} l_i + l_j & \text{ if } |i-j|=1 \\
l_j & \text{ if } |i-j|>1.
\end{cases}
\end{equation}

\subsection{The Jones-Wenzl projector.}
The Jones-Wenzl projector is the unique linear map
\begin{equation*}
p_n \colon V_1^{\otimes n} \rightarrow V_n \rightarrow V_1^{\otimes n}
\end{equation*}
which commutes with the action of the quantum group and is idempotent.
There are many formulas for this map, usually given in terms of Temperley-Lieb generators.  When restricting to weight spaces we could write this map in terms of generators of the quantum group
\begin{equation*}
\frac{1}{{n \brack {k}}} E^{(k)} F^{(k)} \colon V_1^{\otimes n}[-n+2k] \rightarrow V_1^{\otimes n}[-n+2k].
\end{equation*}
For full details see \cite{StroppelSussanColorJ}.

\begin{prop}
The operator $\frac{1}{{n \brack {k}}} E^{(k)} F^{(k)} \colon V_1^{\otimes n} [-n+2k] \rightarrow V_1^{\otimes n} [-n+2k] $ is equal to the restriction of the Jones-Wenzl projector $p_n[-n+2k]$.
\end{prop}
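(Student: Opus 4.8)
The plan is to prove the asserted equality of operators on $V_1^{\otimes n}[-n+2k]$ by a direct computation: decompose $V_1^{\otimes n}$ into irreducible $\dot u$-modules and evaluate $E^{(k)}F^{(k)}$ on each summand using only the formulas \eqref{irreddef}. Recall that $p_n$ is the idempotent endomorphism of $V_1^{\otimes n}$ commuting with the $\dot u$-action whose image is $V_n$. Writing $V_1^{\otimes n}\cong V_n\oplus\bigoplus_{j<n}V_j^{\oplus m_j}$ (so that $V_n$ occurs with multiplicity one), the restriction $p_n[-n+2k]$ is exactly the $\dot u$-equivariant projection of $V_1^{\otimes n}[-n+2k]=V_n[-n+2k]\oplus\bigoplus_{j<n}V_j[-n+2k]^{\oplus m_j}$ onto the first summand along the rest. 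So it suffices to check that $\tfrac{1}{{n \brack k}}E^{(k)}F^{(k)}$ acts as the identity on $V_n[-n+2k]$ and annihilates each $V_j[-n+2k]$ with $j<n$.

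Note first that $E^{(k)}F^{(k)}$ does preserve this weight space, since $F^{(k)}$ lowers weights by $2k$ and $E^{(k)}$ raises them by $2k$. On $V_n$ the weight $-n+2k$ is carried by the basis vector $v_k$, and \eqref{irreddef}, together with $F^{(k)}=F^k/[k]!$ and $E^{(k)}=E^k/[k]!$, gives $F^{(k)}v_k=\tfrac{[n-k+1][n-k+2]\cdots[n]}{[k]!}\,v_0={n \brack k}\,v_0$ and then $E^{(k)}v_0=\tfrac{[1][2]\cdots[k]}{[k]!}\,v_k=v_k$; hence $E^{(k)}F^{(k)}$ is multiplication by ${n \brack k}$ on $V_n[-n+2k]$. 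On a summand $V_j$ with $j<n$, the weight $-n+2k$, if it occurs at all, is carried by $v_i$ with $-j+2i=-n+2k$, i.e.\ $i=k-\tfrac{n-j}{2}$; since $j\equiv n \bmod 2$ and $0<n-j$, we have $i\le k-1$, so applying $F$ to $v_i$ a total of $k>i$ times runs past the lowest weight vector $v_0$ of $V_j$ and yields $0$; thus $F^{(k)}v_i=0$ and a fortiori $E^{(k)}F^{(k)}$ kills $V_j[-n+2k]$. Combining the two paragraphs gives $\tfrac{1}{{n \brack k}}E^{(k)}F^{(k)}=p_n[-n+2k]$.

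The genuinely delicate step — and the one I expect to be the main obstacle — is the semisimple decomposition of $V_1^{\otimes n}$ used above, which can fail at a root of unity. I would handle this in one of two ways: either restrict to the range $n\le p-1$ relevant to the $p$-th root of unity story, where every $V_j$ occurring is simple and $V_1^{\otimes n}$ is genuinely semisimple, so that the argument applies verbatim; or first prove the operator identity over $\mathbb{Z}[q,q^{-1}][\tfrac{1}{[n]!}]$ — where $V_1^{\otimes n}$ is semisimple, $p_n$ is defined, and both sides have matrix entries (in, say, the basis $\{l_r\}$) that are Laurent polynomials in $q$ over $\mathbb{Z}[\tfrac{1}{[n]!}]$ — and then specialize to $\mathbb{O}_p$ wherever $[n]!$ is invertible. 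The only other thing to verify is the quantum-integer bookkeeping in the two displayed computations, i.e.\ that the $E^{(k)}$-factor contributes exactly $1$ and the $F^{(k)}$-factor exactly ${n \brack k}$; this is routine from \eqref{irreddef}.
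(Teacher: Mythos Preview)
Your argument is correct and the quantum-integer bookkeeping is fine, but it follows a genuinely different route from the paper's proof. The paper does not decompose $V_1^{\otimes n}$ at all; instead it invokes the standard uniqueness characterization of the Jones--Wenzl projector and simply asserts that one checks three properties of $\tfrac{1}{{n\brack k}}E^{(k)}F^{(k)}$ directly: it is idempotent, it commutes with $E$ and $F$, and it is annihilated on both sides by the Temperley--Lieb generators $U_i$. (Full details are deferred to another paper.) Your approach is more explicit and arguably more informative, since it identifies exactly which summands are killed and why, and it avoids having to verify the Temperley--Lieb annihilation relation. The price you pay is the reliance on semisimplicity of $V_1^{\otimes n}$, which as you correctly note is the only delicate point; your two proposed fixes (restricting to $n\le p-1$, or proving the identity generically over $\Z[q,q^{-1}][\tfrac{1}{[n]!}]$ and specializing) are both standard and adequate here. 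The paper's route sidesteps this by leaning on the uniqueness statement, but of course the existence of $p_n$ itself already presupposes an appropriate setting where $V_n$ splits off, so in the end both proofs need the same hypothesis.
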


\begin{proof}
One checks that $\frac{1}{{n \brack {k}}} E^{(k)} F^{(k)}$ is idempotent, commutes with $E$ and $F$ and is annihilated by the elements ${U}_i$ of the Temperley-Lieb algebra.  For more details see \cite{StroppelSussanColorJ}.
\end{proof}

\section{A categorical braid group action at a prime root of unity}\label{sec-zig-zag}
\subsection{Elements of $p$-DG algebras.} As a matter of notation for the rest of the paper, the undecorated tensor product symbol $\otimes$ will always denote tensor product over the ground field $\Bbbk$. We first recall some basic notions.
\begin{defn}\label{def-p-DGA}Let $\Bbbk$ be a field of positive characteristic $p$. A $p$-DG algebra $A$ over $\Bbbk$ is a $\Z$-graded $\Bbbk$-algebra equipped with a degree-two\footnote{In general one should define the degree of $\dif_A$ to be one. We adopt this degree only to match earlier grading conventions in categorification. One may adjust the gradings of the algebras we consider so as to make the degree of $\dif_A$ to be one, but we choose not to do so.} endomorphism $\dif_A$, such that, for any elements $a,b\in A$, we have
\[
\dif_A^p(a)=0, \quad \quad \dif_A(ab)=\dif_A(a)b+a\dif_A(b).
\]
\end{defn}

Compared with the usual DG case, the lack of the usual sign in the second equation above is because of the fact that the Hopf algebra $\Bbbk[\dif]/(\dif^p)$ is a genuine Hopf algebra, not a Hopf super-algebra.

As in the DG case, one has the notion of left and right $p$-DG modules.

\begin{defn}\label{def-p-DG-module}Let $(A,\dif_A)$ be a $p$-DG algebra. A left $p$-DG module $(M,\dif_M)$ is a $\Z$-graded $A$-module endowed with a degree-two endomorphism $\dif_M$, such that, for any elements $a\in A$ and $m\in M$, we have
\[
\dif_M^p(m)=0, \quad \quad \dif_M(am)=\dif_A(a)m+a\dif_M(m).
\]
Similarly, one has the notion of a right $p$-DG module.
\end{defn}

It is readily checked that the category of left (right) $p$-DG modules, denoted $A_\dif\!-\!\mathrm{mod}$ ($A^{op}_\dif\!-\!\mathrm{mod}$),  is abelian, with morphisms grading preserving $A$-module maps that also commute with differentials. When no confusion can be caused, we will drop all subscripts in differentials.

\begin{defn} \label{def-null-homotopy}
Let $M$ and $N$ be two $p$-DG modules. A morphism $f:M\lra N$ in $A_\dif\!-\!\mathrm{mod}$ is called \emph{null-homotopic} if there is an $A$-module map $h$ of degree $2-2p$ such that
\[
f=\sum_{i=0}^{p-1}\dif_N^{i}\circ h \circ \dif_M^{p-1-i}.
\]
\end{defn}

It is an easy exercise to check that null-homotopic morphisms form an ideal in $A_\dif\!-\!\mathrm{mod}$. The resulting quotient category, denoted $\mathcal{H}(A)$, is called the \emph{homotopy category} of left $p$-DG modules over $A$, and it is a triangulated category.

The simplest $p$-DG algebra is the ground field equipped with the trivial differential, whose homotopy category is denoted $\Bbbk[\dif]/(\dif^p)\!-\!\underline{\mathrm{gmod}}$\footnote{This is usually known as the graded stable category of $\Bbbk[\dif]/(\dif^p)$. }. In general, given any $p$-DG algebra $A$, one has a forgetful functor
\begin{equation}
\mathrm{For}:\mathcal{H}(A)\lra \Bbbk[\dif]/(\dif^p)\!-\!\underline{\mathrm{gmod}}
\end{equation}
by remembering only the underlying $p$-complex structure up to homotopy of any $p$-DG module over $A$. A morphism between two $p$-DG modules $f:M\lra N$ (or its image in the homotopy category) is called a \emph{quasi-isomorphism} if $\mathrm{For}(f)$ is an isomorphism in $\Bbbk[\dif]/(\dif^p)\!-\!\underline{\mathrm{gmod}}$. Denoting the class of quasi-isomorphisms in $\mathcal{H}$ by $\mathcal{Q}$, we define the $p$-DG derived category of $A$ to be
\begin{equation}\label{eqn-derived-cat}
\mathcal{D}(A):=\mathcal{H}(A)[\mathcal{Q}^{-1}],
\end{equation}
the localization of $\mathcal{H}(A)$ at quasi-isomorphisms. By construction, $\mathcal{D}(A)$ is triangulated.

Many constructions in the usual homological algebra of DG-algebras translate over into the $p$-DG context without any trouble. We will see a few illustrations of this phenomenon in what follows.

Generalizing some useful concepts from the usual DG theory \cite{KeICM}, we make the following definitions.

\begin{defn}\label{def-finite-cell} Let $A$ be a $p$-DG algebra, and $K$ be a (left or right) $p$-DG module. 
\begin{enumerate}
\item[(i)] The module $K$ is said to satisfy \emph{property (P)} if there exists an increasing, possibly infinite, exhaustive $\dif_K$-stable filtration $F^\bullet$, such that each subquotient $F^{\bullet}/F^{\bullet -1}$ is isomorphic to a direct sum of $p$-DG direct summands of $A$. 
\item[(ii)]The module $K$ is called a \emph{finite cell module}, if it satisfies property (P), and as an $A$-module, it is finitely generated (necessarily projective by the property (P) requirement).
\end{enumerate}
\end{defn}

Property-(P) modules are the analogues of projective modules in usual homological algebra. For instance, the morphism spaces from a property-(P) module to any $p$-DG module coincide in both the homotopy and derived categories.

It is a theorem \cite[Theorem 6.6]{QYHopf} that there are always sufficiently many property-(P) modules: for any $p$-DG module $M$, there is a surjective quasi-isomorphism 
\begin{equation}\label{eqn-bar-resolution}
\mathbf{p}(M)\lra M
\end{equation} 
of $p$-DG modules, with $\mathbf{p}(M)$ satisfying property (P).  We will usually refer to such a property-(P) replacement $\mathbf{p}(M)$ for $M$ as a \emph{bar resolution}. The proof of its existence is similar to that of the usual (simplicial) bar resolution for DG modules over DG algebras. We omit the proof here, but an example of such a construction will be used in the proof of Theorem \ref{thm-pdgjw}. 

In a similar vein, finite cell modules play the role of finite projective modules in usual homological algebra. We give an example of a finite cell module whose construction mimics the notion of ``one-sided twisted complexes'' for DG modules.

\begin{example}\label{eg-finite-cell}
 If $A$ is a $p$-DG algebra, then $A$ itself is finite cell. Now, if $a_0\in A$ is an element such that $\dif^{p-1}(a_0)=0$, then one can easily check that the differential rule on the rank-two free module $Ax\oplus Ay$
\[
 \dif(ax):=\dif(a)x+aa_0y, \quad \dif (ay):=\dif(a)y
\]
defines the structure of a left $p$-DG module which is finite cell. We will schematically depict this module as
\[
Ax \stackrel{a_0}{\lra} Ay,
\]
even though $a_0$ does not necessarily commute with differentials. One should understand this diagram as a filtered $p$-DG module, with $F^0=Ay$ and $F^1=Ax\oplus Ay$.
\end{example}

A $p$-DG bimodule $_AM_B$ over two $p$-DG algebras $A$ and $B$ is a $p$-DG module over $A\otimes B^{\mathrm{op}}$. One has the associated derived tensor product functor
\begin{equation}\label{eqn-def-derived-tensor}
M\otimes_B^{\mathbf{L}}(-): \mathcal{D}(B)\lra \mathcal{D}(A), \quad N\mapsto \mathbf{p}(M)\otimes_B N,
\end{equation}
where $\mathbf{p}(M)$ is a bar resolution for $M$ as a $p$-DG module over $A\otimes B^{\mathrm{op}}$

One useful fact about such functors is the following theorem, whose proof can be found in \cite[Section 8]{QYHopf}.

\begin{thm}\label{thm-qis-functors}
Let $f:{M_1}\lra M_2$ be a quasi-isomorphism of $p$-DG bimodules. Then $f$ descends to an isomorphism of the induced derived tensor product functors. \hfill$\square$
\end{thm}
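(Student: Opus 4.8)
The plan is to reduce the statement to a \emph{homotopy equivalence} between bimodule bar resolutions and then transport it through the (underived) base change functor $(-)\otimes_B N$, which descends to the homotopy categories. Since all the serious homological-algebra input is already available, the argument should be essentially formal; the work is in invoking property (P) in the correct category and keeping track of naturality.

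First I would fix bar resolutions $\pi_i\colon\mathbf{p}(M_i)\lra M_i$ as $p$-DG modules over $A\otimes B^{\op}$, so that $M_i\otimes_B^{\mathbf{L}}(-)$ is computed by $\mathbf{p}(M_i)\otimes_B(-)$. Because $\mathbf{p}(M_1)$ satisfies property (P), the groups $\Hom(\mathbf{p}(M_1),-)$ agree in $\mathcal{H}(A\otimes B^{\op})$ and $\mathcal{D}(A\otimes B^{\op})$, and since $\pi_2$ is invertible in the derived category there is a morphism $\tilde f\colon\mathbf{p}(M_1)\lra\mathbf{p}(M_2)$, unique up to homotopy, with $\pi_2\tilde f$ homotopic to $f\pi_1$. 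Applying the forgetful functor $\mathrm{For}$ and using that $\pi_1$, $\pi_2$ and $f$ are quasi-isomorphisms shows that $\tilde f$ is a quasi-isomorphism as well.

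Next I would check that $\tilde f$ is already an isomorphism in $\mathcal{H}(A\otimes B^{\op})$. It is invertible in $\mathcal{D}(A\otimes B^{\op})$; write $g$ for an inverse. By property (P) of $\mathbf{p}(M_2)$ we have $g\in\Hom_{\mathcal{D}}(\mathbf{p}(M_2),\mathbf{p}(M_1))=\Hom_{\mathcal{H}}(\mathbf{p}(M_2),\mathbf{p}(M_1))$, so $g$ is represented by an honest $p$-DG bimodule map; and the relations $g\tilde f=\id$ and $\tilde f g=\id$, which hold in $\mathcal{D}$, already hold in $\mathcal{H}$, because by property (P) of $\mathbf{p}(M_1)$ and of $\mathbf{p}(M_2)$ respectively the relevant $\Hom$-groups in $\mathcal{H}$ and in $\mathcal{D}$ coincide. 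Hence $\tilde f$ is a homotopy equivalence of $p$-DG bimodules.

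Finally, for any $p$-DG module $N$ over $B$, the functor $(-)\otimes_B N$ sends null-homotopic bimodule maps to null-homotopic $A$-module maps: tensoring a homotopy $h$ as in Definition~\ref{def-null-homotopy} with $\id_N$ and expanding the total differential on $\mathbf{p}(M_i)\otimes_B N$ by the Leibniz rule reproduces a null-homotopy of the same shape. Thus $(-)\otimes_B N$ descends to a functor $\mathcal{H}(A\otimes B^{\op})\to\mathcal{H}(A)$, and it sends the homotopy equivalence $\tilde f$ to an isomorphism $\tilde f\otimes_B\id_N$ in $\mathcal{H}(A)$, a fortiori in $\mathcal{D}(A)$. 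This assignment is natural in $N$ and independent, up to homotopy, of the choices of $\mathbf{p}(M_i)$ and of the lift $\tilde f$, so it is precisely the natural transformation $f\otimes_B^{\mathbf{L}}(-)$, now exhibited as a natural isomorphism. The only genuinely technical ingredient is the existence and near-uniqueness of bimodule bar resolutions and of the lift $\tilde f$ — the $p$-DG analogue of the lifting property of cofibrant objects, supplied by \cite{QYHopf}; the main point to be careful about is that property (P) must be used throughout in the bimodule category $(A\otimes B^{\op})_\dif\!-\!\mathrm{mod}$, and not merely over $A$ or $B$.
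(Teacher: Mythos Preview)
The paper does not actually prove this theorem: the statement is immediately closed with a $\square$ and the reader is referred to \cite[Section 8]{QYHopf}. So there is no in-paper proof to compare against; your argument stands on its own, and it is the standard one.

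Your proof is correct. The reduction to property-(P) replacements in the bimodule category, the upgrade of the quasi-isomorphism $\tilde f$ to a homotopy equivalence via the coincidence of $\mathcal{H}$- and $\mathcal{D}$-Hom out of property-(P) objects, and the transport through $(-)\otimes_B N$ are exactly the ingredients used in \cite{QYHopf}. One small clarification on the last step: rather than expanding the binomial in the total differential, it is cleanest to check the single identity $(\dif\cdot h)\otimes_B\id_N = \dif\cdot(h\otimes_B\id_N)$ for any $A\otimes B^{\op}$-linear $h$, where $\dif$ acts on $\Hom$-spaces via the antipode as $(\dif\cdot h)(m)=\dif h(m)-h(\dif m)$; the cross terms $h(m)\otimes\dif_N(n)$ cancel. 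Iterating gives $(\dif^{p-1}\cdot h)\otimes_B\id_N=\dif^{p-1}\cdot(h\otimes_B\id_N)$, and the null-homotopy formula of Definition~\ref{def-null-homotopy} is precisely $\dif^{p-1}\cdot h$ up to a harmless global sign. Your caution that property~(P) must be invoked over $A\otimes B^{\op}$ rather than over $A$ or $B$ alone is well placed.
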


As the last example of adapting the usual DG theory to the present situation, one may define for the triangulated category $\mathcal{D}(A)$ its \emph{Grothendieck group} $K_0(A)$ as usual. What we gain as dividend in the current situation is that, since $\mathcal{D}(A)$ is a ``categorical module'' over $\Bbbk[\dif]/(\dif^p)\!-\!\underline{\mathrm{gmod}}$, the abelian group $K_0(A)$ naturally has a module structure over the auxiliary cyclotomic ring  at a $p$th root of unity, which was defined in equation \eqref{eqn-aux-ring} of the previous Section:
\begin{equation}\label{eqn-aux-ring-2}
\mathbb{O}_p:=K_0(\Bbbk[\dif]/(\dif^p)\!-\!\underline{\mathrm{gmod}}).
\end{equation}
This explains the diagram \eqref{eqn-categorical-Op-mod} in the Introduction.

The Grothendieck groups are the algebraic invariants that we will be interested in for certain $p$-DG algebras in this note. As in the DG case, in order to avoid trivial cancellations one should restrict the class of objects used to define $K_0(A)$. It turns out that a good condition is the \emph{compactness} of a $p$-DG module. However, we will not recall this definition, but refer the reader to \cite[Section 7]{QYHopf} for the details.

\subsection{The $p$-DG algebra $A_n^!$.} As an illustration of the general theory discussed above, we now present a $p$-DG algebra that is relevant to quantum topology and easy to define.

Let $\Bbbk$ be a field of any characteristic $p\geq 0$ (later we will only focus on the case $p>0$). Let $n$ be a natural number greater than or equal to two, and $Q_n$ be the following quiver:
\begin{equation}\label{quiver-Q}
\xymatrix{
 \overset{1}{\circ} &
 \cdots \ltwocell{'}&
 \overset{i-1}{~\circ~}\ltwocell{'}&
 \overset{i}{\circ}\ltwocell{'}&
 \overset{i+1}{~\circ~}\ltwocell{'}&
 \cdots \ltwocell{'}&
 \overset{n}{\circ}\ltwocell{'}
 }
\end{equation}
Let $\Bbbk Q_n$ be the path algebra associated to $Q_n$ over the ground field. Following Khovanov-Seidel \cite{KS}, we use, for instance, the symbol $(i|j|k)$, where $i,j,k$ are vertices of the quiver $Q_n$, to denote the path which starts at a vertex $i$, then goes through $j$ (necessarily $j=i\pm 1$) and ends at $k$.
The composition of paths is given by
\begin{equation}
(i_i|i_2|\cdots|i_r)\cdot (j_1|j_2|\cdots|j_s)=
\left\{
\begin{array}{ll}
(i_i|i_2|\cdots|i_r|j_2|\cdots|j_s) & \textrm{if $i_r=j_1$,}\\
0 & \textrm{otherwise,}
\end{array}
\right.
\end{equation}
where $i_1,\dots, i_r$ and $j_1, \dots, j_s$ are sequences of neighboring vertices in $Q_n$.

\begin{defn}\label{def-algebra-An}
The algebra $A_n^!$ is the quotient of the path algebra $\Bbbk Q_n$ by the relations
\[
(1|2|1)=0 ~~ \text{ and } ~~ (i|i-1|i)=(i|i+1|i) ~~ \text{ for } ~~ i=2,\dots, n-1.
\]
\end{defn}
It should be mentioned that $A_n^!$ is isomorphic to the endomorphism algebra of a minimal projective generator of the category $\mathcal{O}_{1,n-1}(\mathfrak{gl}_n)$ considered in \cite{BFK}. 
The sum of categories $\oplus_{i=0}^n \mathcal{O}_{i,n-i}(\mathfrak{gl}_n)$ has been used to categorify $V_1^{\otimes n}$ in \cite{BFK}. Moreover, the algebra $A_n^!$ is Koszul, whose quadratic dual is isomorphic to the algebra $A_n$ considered in \cite{KS}.  

This algebra arises naturally from the truncated polynomial algebra $C_n:=\Bbbk[x]/(x^n)$.
For $i=1,\ldots, n$, there are ideals\footnote{The ideals $x^{n-i}C_n$ ($i=1,\dots, n$) are isomorphic to the quotient modules $C/(x^i)$, only as $C$-modules but not as $p$-DG modules. However, the results obtained below will not be affected by using the quotient modules instead: the differentials obtained in different interpretations are conjugate to each other by the involution on $A_n^!$ which reverses arrow directions.}
\begin{equation*}
G^i=x^{n-i}C_n \{ i-n \},
\end{equation*}
where the curly bracket $\{i\}$ stands for a graded module with the grading shifted up by $i$. A routine calculation shows that
\begin{equation*}
A_n^! \cong \END_{C_n}(\bigoplus_{i=1}^n G^i),
\end{equation*}
where the capital $\END_{C_n}$ stands for the space of all endomorphisms of a finite-dimensional $C_n$-module, not necessarily grading preserving.  
The path $(i|i+1)$ gets sent to the homomorphism $G^{i+1} \rightarrow G^i$ which maps $x^{n-i-1}$ to $x^{n-i}$.
The path $(i|i-1)$ gets sent to the homomorphism $G^i \rightarrow G^{i+1}$ which maps $x^{n-i}$ to $x^{n-i}$. The endomorphism algebra inherits a differential determined, for any $\phi \in \END_{C_n}(\bigoplus_{i=1}^n G^i)$ and $z\in \bigoplus_{i=1}^n G^i$, by
\[
\dif(\phi)(z)=\dif(\phi(z))-\phi(\dif(z)).
\]

This realization of $A_n^!$ endows the algebra with a $p$-DG structure.
On $C_n$ there is a differential $ \partial $ given by $\partial(x)=x^2$.  It is clear that each $G^i$ is a $p$-DG submodule of $C_n$.
This gives $A_n^!$ the structure of a $p$-DG algebra where the differential is given by
\[
\partial(i|i+1)=(i|i+1|i|i+1), \quad \quad \partial(i|i-1)=0.
\]

Since the algebra $A_n^!$ is a finite-dimensional Koszul algebra with $p$-differential acting trivially on the degree zero component, the computation of the Grothendieck group of the derived category of $A_n^!$ reduces to the computation of the Grothendieck without the presence of the differential calculated in Proposition ~\ref{K0generic}.  See ~\cite[Corollary 2.18]{EQ1} for more details.

\begin{prop} There is an isomorphism of $\mathbb{O}_p$-modules
$$K_0(\mc{D}(A_n^!)) \cong V_1^{\otimes n} [-n+2].$$
\end{prop}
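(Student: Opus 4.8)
The plan is to separate the calculation into a ``generic'' part, where the $p$-differential is ignored, and a ``base-change'' part, where one passes from $\Z[q,q^{-1}]$ to $\mathbb{O}_p=\Z[q]/(\Psi_p(q^2))$. First I would forget $\dif$ and treat $A_n^!$ as an ordinary finite-dimensional $\Z_{\geq 0}$-graded algebra. It has exactly $n$ pairwise non-isomorphic graded indecomposable projectives $P_i=A_n^! e_i$, one for each vertex of $Q_n$, so the graded Grothendieck group of finitely generated projective graded $A_n^!$-modules is free over $\Z[q,q^{-1}]$ on the classes $[P_1],\dots,[P_n]$, with $q$ acting by the internal grading shift $\{1\}$. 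Using that $A_n^!$ is Koszul and is the endomorphism algebra of a minimal projective generator of $\mathcal{O}_{1,n-1}(\mathfrak{gl}_n)$, the Bernstein--Frenkel--Khovanov identification \cite{BFK} of $\bigoplus_i\mathcal{O}_{i,n-i}(\mathfrak{gl}_n)$ with $V_1^{\otimes n}$ restricts to an isomorphism of $\Z[q,q^{-1}]$-modules between this graded $K_0$ and $V_1^{\otimes n}[-n+2]$, essentially sending classes of simples to the dual canonical basis $\{l_1,\dots,l_n\}$ and classes of projectives to the canonical basis; this is the content of Proposition~\ref{K0generic}. In particular $V_1^{\otimes n}[-n+2]$ is free of rank $n$, over $\Z[q,q^{-1}]$ and hence over $\mathbb{O}_p$.

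Next I would reinstate the $p$-differential. The algebra $A_n^!$ is finite-dimensional, positively graded with semisimple degree-zero part $\bigoplus_i \Bbbk e_i$, and $\dif$ annihilates this degree-zero part; by \cite[Corollary~2.18]{EQ1} these hypotheses guarantee that the $p$-DG derived category behaves like the naive base change. Concretely, each $P_i$ with its inherited differential is a finite cell module, the $P_i$ remain pairwise non-isomorphic compact generators of $\mathcal{D}(A_n^!)$, and the only $\mathbb{O}_p$-linear relations among the $[P_i]$ are those forced by grading shifts; thus the natural map $K_0(A_n^!\text{-gmod})\otimes_{\Z[q,q^{-1}]}\mathbb{O}_p\to K_0(\mathcal{D}(A_n^!))$ is an isomorphism and $K_0(\mathcal{D}(A_n^!))$ is free of rank $n$ over $\mathbb{O}_p$ with basis $[P_1],\dots,[P_n]$. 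Combining this with the previous paragraph gives
\[
K_0(\mathcal{D}(A_n^!))\;\cong\; V_1^{\otimes n}[-n+2]\otimes_{\Z[q,q^{-1}]}\mathbb{O}_p\;\cong\; V_1^{\otimes n}[-n+2],
\]
the last module now taken over $\mathbb{O}_p$ as in Section~\ref{sec-small-qgroup}, with the isomorphism matching $[P_i]$ with the base change of the canonical basis vector attached to vertex $i$.

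I expect the only real obstacle to be the base-change statement of the second paragraph: a priori, localizing at quasi-isomorphisms and passing to $\mathcal{D}(A_n^!)$ could collapse classes or introduce torsion in $K_0$, and this must be ruled out. The mechanism is a triangularity/Fitting argument --- since $\dif$ strictly raises the internal degree it is ``nilpotent and upper-triangular'' with respect to the grading, which lets one approximate each simple $p$-DG module by a finite cell module built from the $P_i$ and then read off that $\{[P_i]\}$ is an $\mathbb{O}_p$-basis of $K_0(\mathcal{D}(A_n^!))$. Since this argument, for the class of Koszul $p$-DG algebras whose differential vanishes in degree zero, is carried out in \cite[Corollary~2.18]{EQ1}, I would simply invoke it; the remainder of the proof is then the classical representation theory of the parabolic block $\mathcal{O}_{1,n-1}(\mathfrak{gl}_n)$, i.e.\ of $A_n^!$.
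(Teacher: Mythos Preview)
Your proposal is correct and follows essentially the same two-step approach as the paper: first compute the ordinary graded $K_0$ of $A_n^!$ (Proposition~\ref{K0generic}), then invoke \cite[Corollary~2.18]{EQ1} for positive/Koszul $p$-DG algebras with $\dif$ trivial in degree zero to pass from $\Z[q,q^{-1}]$ to $\mathbb{O}_p$. Your write-up is more detailed than the paper's, which simply states the reduction in one sentence before the proposition, but the logical structure is identical.
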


For each $i=1,\ldots,n$ there is a simple left $A_n^!$-module $L_i$ spanned by $1 \in \Bbbk$ in degree zero such that $(j)1=\delta_{i,j} 1$.  It is a $p$-DG module with trivial differential.  Similarly one may define the simple right $A_n^!$-module ${}_i L$.  In the Grothendieck group the image of the module $L_i$ gets identified with $l_i \in V_1^{\otimes n} [-n+2]$ defined earlier.

\subsection{A braid group action.}

For $i=1, \ldots, n-1$ define derived functors $\mf{U}_i$ as follows:
\begin{equation*}
\mf{U}_i \colon \mc{D}(A_n^!) \rightarrow \mc{D}(A_n^!) \hspace{.5in}
\mf{U}_i(M) = (L_i \otimes {}_i L \otimes_{A_n^!}^\mathbf{L} M)[-1]\{-1\}.
\end{equation*}

The following result shows that these functors define a (weak) categorical Temperley-Lieb algebra action on the derived category $\mathcal{D}(A_n^!)$.

\begin{thm}
[\cite{QiSussan}]
\label{thm-TL-action}
For $ i=1, \ldots, n-1 $, the functors $ \mf{U}_i $ are self-biadjoint, and they enjoy the following functor-isomorphism relations.
\begin{enumerate}
\item[(i)] $ \mf{U}_i \circ \mf{U}_i \cong \mf{U}_i [-1]\lbrace -1 \rbrace \oplus \mf{U}_i[1] \lbrace 1 \rbrace $,
\item[(ii)] $ \mf{U}_i \circ \mf{U}_j \cong \mf{U}_j \circ \mf{U}_i $ for $ |i-j|>1 $,
\item[(iii)] $ \mf{U}_i \circ \mf{U}_j \circ \mf{U}_i \cong \mf{U}_i $ for $ |i-j|=1 $.
\end{enumerate}
\end{thm}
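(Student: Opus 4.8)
The plan is to recast everything in terms of $p$-DG bimodules over $A_n^!$ and to apply Theorem~\ref{thm-qis-functors}. Write $B_i := L_i \otimes_{\Bbbk} {}_iL$ for the $p$-DG $(A_n^!,A_n^!)$-bimodule underlying $\mathfrak{U}_i$, so that $\mathfrak{U}_i \simeq (B_i \otimes^{\mathbf{L}}_{A_n^!}(-))[-1]\{-1\}$; by Theorem~\ref{thm-qis-functors} it suffices to produce, for each of (i)--(iii), a quasi-isomorphism of $p$-DG bimodules, together with the (co)units realizing the two self-adjunctions. The key simplification is that $L_i$ and ${}_iL$ are one-dimensional, so any iterated composite collapses:
\[
B_{i_1}\otimes^{\mathbf{L}}_{A_n^!}\cdots\otimes^{\mathbf{L}}_{A_n^!}B_{i_r}\ \simeq\ V\otimes_{\Bbbk}\bigl(L_{i_1}\otimes_{\Bbbk}{}_{i_r}L\bigr),
\]
where $V \in \mathcal{D}(\Bbbk)=\Bbbk[\dif]/(\dif^p)\!-\!\underline{\mathrm{gmod}}$ is built by interleaving the ``multiplicity objects'' $V_{ab}:={}_aL\otimes^{\mathbf{L}}_{A_n^!}L_b$. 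So the whole theorem reduces to computing the $V_{ab}$ for $|a-b|\le 1$ (and the tensor product $V_{ij}\otimes_{\Bbbk}V_{ji}$ for $|i-j|=1$) in $\mathcal{D}(\Bbbk)$, plus a duality statement for $\mathrm{RHom}_{A_n^!}(L_i,A_n^!)$.

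To compute the $V_{ab}$, I would replace $L_b$ by its linear (Koszul) projective resolution. Recall that $A_n^!$ is the Auslander algebra of $C_n=\Bbbk[x]/(x^n)$ --- its indecomposable projectives correspond to the indecomposable $C_n$-modules $G^i\cong C_n/(x^i)$ --- hence Koszul of global dimension two, with quadratic dual the zigzag algebra $A_n$ of \cite{KS}; moreover the $p$-differential induced by $\partial(x)=x^2$ is zero on $(A_n^!)_0$, so these Koszul resolutions assemble into finite cell modules in the sense of Definition~\ref{def-finite-cell}, compatibly with $\dif$ --- this is the mechanism already used for the Grothendieck group of $A_n^!$ (cf.\ \cite[Corollary~2.18]{EQ1}). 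With such a resolution, $V_{ab}$ is identified with the graded $p$-complex underlying the ``vertex $a$ to vertex $b$'' piece $(e_aA_ne_b)^{\vee}$ of the quadratic dual, carrying the $p$-differential dual to the one induced on $A_n$. From this one reads off: $e_aA_ne_b=0$ when $|a-b|>1$, so $V_{ab}\simeq 0$ and (ii) holds with both sides zero; $e_aA_ne_b$ is one-dimensional when $|a-b|=1$, so $V_{ab}$ --- and hence $V_{ij}\otimes_{\Bbbk}V_{ji}$ --- is a shifted copy of $\Bbbk$ with necessarily trivial differential, and matching the three built-in shifts $[-1]\{-1\}$ gives (iii); and $e_aA_ne_a$ is two-dimensional for $1\le a\le n-1$ (the unit and the length-two loop at $a$), so $V_{aa}$ is supported in exactly two internal degrees.

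The crux, and the step I expect to require the most care, is relation (i): one must show that the $p$-differential inherited by $V_{aa}$ vanishes, so that $V_{aa}$ is a genuine direct sum of two shifted copies of $\Bbbk$ in $\mathcal{D}(\Bbbk)$ and $\mathfrak{U}_a\circ\mathfrak{U}_a\simeq\mathfrak{U}_a[-1]\{-1\}\oplus\mathfrak{U}_a[1]\{1\}$. Were the differential nonzero, $V_{aa}$ would be the indecomposable two-term module $\Bbbk[\dif]/(\dif^2)$, which is not projective for $p>2$ and thus nonzero in $\mathcal{D}(\Bbbk)$, so $\mathfrak{U}_a\circ\mathfrak{U}_a$ would be a nonsplit self-extension of $\mathfrak{U}_a$ (with the correct action on $K_0$, but the wrong functorial structure), and (i) would fail. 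The vanishing should follow from the fact that the first syzygy contributes nothing, ${}_aL\otimes_{A_n^!}P^1=0$, because $Q_n$ has no loop at $a$ and hence $\mathrm{Ext}^1_{A_n^!}(L_a,L_a)=0$; there is then no intermediate piece through which $\dif$ could join the surviving $\mathrm{Tor}_0$ and $\mathrm{Tor}_2$ classes. Pinning this down requires careful tracking of how the homological grading of the Koszul complex gets folded into the internal grading of the single $p$-DG module $V_{aa}$, and this is where the bookkeeping is concentrated. Finally, for self-biadjointness: $\mathrm{RHom}_{A_n^!}(L_i,A_n^!)$, computed from the finite Koszul resolution of $L_i$, is concentrated in a single cohomological degree and isomorphic, up to an internal shift, to ${}_iL$ for $1\le i\le n-1$ --- equivalently, the Koszul resolution of $L_i$ is, up to an overall shift, dual to its injective coresolution --- a property which fails at the ``projective-generator'' vertex $n$, explaining the hypothesis $i\le n-1$. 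Together with the arrow-reversing anti-automorphism of $A_n^!$ (which fixes vertices, hence interchanges $L_i$ and ${}_iL$), this yields $\mathrm{RHom}_{A_n^!}(B_i,A_n^!)\simeq B_i$ up to the built-in shift, giving one adjunction; the parallel computation for right modules, using the same anti-automorphism, gives the other, and the (co)unit $2$-morphisms are read off from the canonical maps $A_n^!\to B_i$ and $B_i\to A_n^!$. Alternatively, all the non-$p$-DG statements --- including self-biadjointness and the relations (i)--(iii) --- follow at once from a Koszul-duality equivalence $\mathcal{D}(A_n^!)\simeq\mathcal{D}(A_n)$ sending $\mathfrak{U}_i$ to induction--restriction at the $i$-th vertex of the symmetric algebra $A_n$, where these are classical (as in \cite{KS}); the $p$-DG content then reduces to the compatibility of the differentials with this equivalence.
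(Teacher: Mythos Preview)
Your proposal is correct and follows exactly the strategy the paper indicates: invoke Theorem~\ref{thm-qis-functors} to reduce to quasi-isomorphisms of $p$-DG bimodules, then carry out the explicit bimodule comparison --- which you implement via the Koszul resolutions of the simples, equivalently via the quadratic dual $A_n$. The paper gives only this one-line sketch here (deferring the computations to \cite{QiSussan}), but your identification of the crux --- that the induced $p$-differential on the two-dimensional multiplicity object $V_{aa}$ must vanish --- is precisely the sphericality of $L_a$ emphasized in the paragraph following the theorem.
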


These categorical equations can be roughly explained by the fact that the $p$-DG modules $L_i$ are \emph{spherical objects} in the $p$-DG derived category $\mathcal{D}(A_n^!)$. Spherical objects in the usual DG case were studied in \cite{AnnoL,SeidelThomas,KS}.

To convert spherical objects into braid group generators, we mimic the construction in \cite{KS} in the $p$-DG context. Consider the following canonical maps:
\begin{equation}
\epsilon_1 \colon L_i\o {_iL}[-2]\{-2\}\cong L_i\o \RHOM_{A_n^!}(L_i,A_n^!) \stackrel{\lambda_1}{\lra} A_n^!, \label{eqn-biadjunction-1}
\end{equation}
\begin{equation}
\eta_2 \colon A_n^! \stackrel{\lambda_2}{\lra} \RHOM_\Bbbk (\RHOM_{A_n^!}(A_n^!,{L_i}),{L_i}) \cong L_i\o {_iL}. \label{eqn-biadjunction-4}
\end{equation}
We use them to define \emph{filtered $p$-DG bimodules} and derived functors.

\begin{defn}\label{def-twist-functors}For each $i\in \{1,\dots, n-1\}$, we define the functors $\mf{T}_i$ and $\mf{T}_i^\prime$ as follows.
\begin{enumerate}
\item[(i)] The functor $\mf{T}_i: \mc{D}(A_n^!)\lra \mc{D}(A_n^!)$ is given by the derived tensor product with the cocone of the bimodule adjunction map \eqref{eqn-biadjunction-4}
\[
 A_n^! \xrightarrow{\lambda_2[-1]} \RHOM_\Bbbk (\RHOM_{A_n^!}(A_n^!,{L_i}),{L_i}) [-1] \cong \mf{U}_i \{1\},
\]

\item[(ii)]The functor $\mf{T}_i^\prime: \mc{D}(A_n^!)\lra \mc{D}(A_n^!)$ is given by the derived tensor product with the cone of the bimodule adjunction map \eqref{eqn-biadjunction-1}:
\[
\mf{U}_i\{-1\} \cong L_i \o \RHOM_{A_n^!}(L_i,A_n^!)[1] \xrightarrow{\lambda_1[-1]} A_n^!,
\]
\end{enumerate}
\end{defn}

\begin{thm}
\cite{QiSussan}
\label{thm-braidrelations}
The functors $ \mf{T}_i, \mf{T}_i' $ for $ i=1, \ldots, n-1 $ satisfy braid relations, i.e., there are isomorphisms of functors
\begin{enumerate}
\item $ \mf{T}_i \circ \mf{T}_i' \cong \Id \cong \mf{T}_i' \circ \mf{T}_i $
\item $ \mf{T}_i \circ \mf{T}_j  \cong \mf{T}_j \circ \mf{T}_i $ if $ |i-j|>1 $
\item $ \mf{T}_i \circ \mf{T}_{i+1} \circ \mf{T}_i \cong \mf{T}_{i+1} \circ \mf{T}_{i} \circ \mf{T}_{i+1} $.
\end{enumerate}
\end{thm}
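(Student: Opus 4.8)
The plan is to realize each of the functors $\mf{T}_i$ and $\mf{T}_i'$ as derived tensor product with an explicit filtered $p$-DG bimodule over $A_n^!\otimes(A_n^!)^{\op}$ — the cocone, respectively the cone, of the adjunction maps \eqref{eqn-biadjunction-4}, \eqref{eqn-biadjunction-1} — to verify each of (1)--(3) as a quasi-isomorphism of such bimodules, and then to descend to isomorphisms of the induced functors by Theorem~\ref{thm-qis-functors}. This mirrors the Seidel--Thomas and Khovanov--Seidel treatment of spherical twists, transported into the hopfological setting; every tool it needs is already in place, namely bar resolutions \eqref{eqn-bar-resolution}, property-(P) replacements, finite ``one-sided twisted complexes'' of the kind in Example~\ref{eg-finite-cell}, and the derived tensor product \eqref{eqn-def-derived-tensor}, all developed in \cite{QYHopf}.

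The computational core is the ``spherical package'' for the simple $p$-DG modules $L_i$, which carry trivial differential. Since $A_n^!$ is Koszul with $\mathrm{Ext}$-algebra the zigzag algebra $A_n=(A_n^!)^!$ of \cite{KS} recalled after Definition~\ref{def-algebra-An}, and since $\dif$ acts trivially in degree zero — so that the induced $p$-differentials on the relevant $\mathrm{Hom}$-complexes vanish — one obtains, as $p$-complexes with trivial differential,
\begin{align*}
\RHOM_{A_n^!}(L_i,L_i)&\simeq \Bbbk\oplus\Bbbk[-2]\{-2\}, & \RHOM_{A_n^!}(L_i,L_{i\pm1})&\simeq \Bbbk[-1]\{-1\},\\
\RHOM_{A_n^!}(L_i,L_j)&\simeq 0 \ \ (|i-j|>1), & \RHOM_{A_n^!}(L_i,A_n^!)&\cong {}_iL[-2]\{-2\},
\end{align*}
the last identification being the one already built into \eqref{eqn-biadjunction-1}. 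Hence each $L_i$ is two-spherical, and $L_1,\dots,L_{n-1}$ form a configuration of type $A_{n-1}$ in $\mc{D}(A_n^!)$.

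Granting this package, relations (1) and (2) are short. For (1), tensoring the cocone bimodule of $\mf{T}_i$ with the cone bimodule of $\mf{T}_i'$ yields a filtered bimodule whose only contributions beyond $A_n^!$ are controlled by $\RHOM_{A_n^!}(L_i,L_i)\simeq\Bbbk\oplus\Bbbk[-2]\{-2\}$; the evaluation and coevaluation maps $\lambda_1,\lambda_2$ identify its two summands with the two $\mathrm{Cone}$-terms — this is precisely the content of $L_i$ being spherical — so Gaussian elimination of $p$-DG complexes collapses the composite to $A_n^!$, and likewise in the other order. For (2), the bimodules of $\mf{T}_i$ and $\mf{T}_j$ with $|i-j|>1$ are supported near disjoint vertices of $Q_n$, and the single term by which the two orders of composition differ is controlled by $\RHOM_{A_n^!}(L_i,L_j)\simeq 0$; thus both composites are quasi-isomorphic to one and the same explicit iterated-cone bimodule.

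Relation (3) is the crux, and I would obtain it from the naturality of spherical twists. Since (1) shows each $\mf{T}_i$ is a triangulated auto-equivalence, one first establishes in the $p$-DG context the conjugation identity: for every auto-equivalence $\Phi$ of $\mc{D}(A_n^!)$ and every spherical object $E$, $\Phi\circ\mf{T}_E\circ\Phi^{-1}\cong\mf{T}_{\Phi(E)}$, which follows from the Fourier--Mukai/bimodule description of the twist together with the functoriality of its defining cone in $E$. One then computes, from the type $A_2$ part of the spherical package and a direct manipulation of the cone bimodules paralleling \cite{KS}, that $\mf{T}_i\circ\mf{T}_{i+1}(L_i)\cong L_{i+1}$ up to grading shifts — which do not affect the associated twist — so that taking $\Phi=\mf{T}_i\circ\mf{T}_{i+1}$ gives $\mf{T}_i\mf{T}_{i+1}\,\mf{T}_i\,(\mf{T}_i\mf{T}_{i+1})^{-1}\cong\mf{T}_{i+1}$, i.e.\ (3). (Alternatively one may bypass the conjugation formalism and exhibit directly a single explicit $p$-DG bimodule, an iterated cone of indecomposable projective bimodules, quasi-isomorphic to both triple composites.) The difficulty I anticipate lies entirely in the hopfological bookkeeping: one must track the differential $\dif$ on these iterated cones and check that the comparison maps — whose underlying maps of graded complexes are precisely those of Khovanov--Seidel — commute with $\dif$, i.e.\ that the relevant Khovanov--Seidel chain homotopies lift to $\mc{H}(A_n^!)$; assembling just enough of the spherical-twist machinery over $\Bbbk[\dif]/(\dif^p)$ to make the conjugation identity rigorous is the companion point requiring care.
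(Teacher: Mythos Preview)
Your proposal is correct and, for relations (1) and (2), essentially coincides with what the paper does: the paper's own proof here is a single sentence stating that Theorems~\ref{thm-TL-action} and \ref{thm-braidrelations} are ``applications of Theorem~\ref{thm-qis-functors} plus explicit comparisons of the bimodules involved,'' with the details deferred to the cited reference. Your outline of constructing the explicit cone/cocone bimodules, comparing their composites, and invoking Theorem~\ref{thm-qis-functors} is exactly this.

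For relation (3) there is a genuine difference of route. The paper (and the reference it cites) proceeds by the direct option you mention parenthetically: write down an explicit iterated-cone $p$-DG bimodule and check by hand that both triple composites are quasi-isomorphic to it, tracking $\dif$ through the Khovanov--Seidel computation. Your primary route instead goes through the abstract conjugation identity $\Phi\circ\mf{T}_E\circ\Phi^{-1}\cong\mf{T}_{\Phi(E)}$ for spherical twists, reducing (3) to the single object-level computation $\mf{T}_i\mf{T}_{i+1}(L_i)\cong L_{i+1}$. This is more conceptual and scales better to other Coxeter types, but --- as you correctly flag --- it requires porting enough of the Seidel--Thomas machinery (functoriality of the twist in its spherical object, compatibility with auto-equivalences) to the hopfological setting, which is additional infrastructure not developed in the present paper. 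The direct bimodule comparison avoids that overhead at the cost of a longer but entirely concrete verification.
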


The proofs of both Theorems  \ref{thm-TL-action} and \ref{thm-braidrelations} are applications of Theorem \ref{thm-qis-functors} plus explicit comparisons of the bimodules involved.

\section{A categorification of quantum $\mathfrak{sl}_2$ at prime roots of unity}\label{sec-cat-sl(2)}
\subsection{The $p$-DG nilHecke category.}The ring $C_n$ considered in the previous section has a natural topological origin as $C_n \cong \mH^*(\mathbb{P}^{n-1},\Bbbk)$. Next, we will consider other similarly defined rings from topology, and equip them with $p$-differentials over a field of characteristic $p$.

Let $Fl_n$ be the flag variety of complete flags in $\mathbb{C}^n$. It is well known that the $GL_n(\mathbb{C})$-equivariant cohomology of $Fl_n$ is identified with
\begin{equation}\label{eqn-equ-coho-FL}
\mH_{GL_n(\mathbb{C})}^*(Fl_n,\Bbbk)\cong \mH^*_{B_n}(\mathrm{pt},\Bbbk)\cong \pol_n,
\end{equation}
where $B_n$ stands for a Borel subgroup of $GL_n(\mathbb{C})$, and $\pol_n$ denotes the algebra of polynomial functions on $n$-letters. As such, $\pol_n$ is naturally a module over
\begin{equation}\label{eqn-equ-coho-pt}
\mH^*_{GL_n(\mathbb{C})}(\mathrm{pt})\cong \sym_n.
\end{equation}
The algebra of symmetric functions $\sym_n$ is nothing but the polynomial algebra on the usual elementary symmetric functions $e_i$ ($i=1,\dots, n$) on $n$-letters. We define a $p$-DG algebra structure on $\sym_n$ by setting
\begin{equation}\label{eqn-p-dg-sym}
\dif(e_i):=e_1e_i-(i+1)e_{i+1},
\end{equation}
with it understood that $e_{n+1}=0$. Write the resulting $p$-DG algebra as $(\sym_n,\dif)$.

The \emph{nilHecke algebra on $n$-letters} $\nh_n$, by definition, is the endomorphism algebra of the module $\pol_n$ over $\sym_n$:
\begin{equation}
\nh_n\cong \END_{\sym_n}(\pol_n).
\end{equation}
It is useful to give a generators-and-relations presentation of $\nh_n$ as follows. Let $x_i$ $(i=1,\dots, n)$ be the operator of multiplication on $\pol_n$ by the $i$th variable, and define $\tau_i$ $(i=1,\dots, n-1)$ to be the $i$th \emph{divided difference operator} given by
\[
\tau_i(f)=\dfrac{f-{^{i}f}}{x_i-x_{i+1}}.
\]
Here $f$ is any polynomial in $\pol_n$, and $^{i}f$ stands for $f$ with the $i$th and $i+1$st variables exchanged.  A full set of relations satisfied by the generators is given by
\begin{equation}\label{eqn-NH-relation}
\begin{gathered}
x_ix_j=x_jx_i, \quad \quad x_i\tau_j=\tau_jx_i~(|i-j|>1),\quad \quad \tau_i\tau_j=\tau_j\tau_i~(|i-j|>1),\\
\tau_i^2=0,
\quad x_i\tau_i-\tau_ix_{i+1}=1=\tau_ix_i-x_{i+1}\tau_i,
\quad \tau_{i}\tau_{i+1}\tau_1=\tau_{i+1}\tau_i\tau_{i+1}.
\end{gathered}
\end{equation}
Define
\begin{equation*}
\nh:=\bigoplus_{n\in \N}\nh_n\gmod.
\end{equation*}
A key property of $\nh$ is that it is a \emph{monoidal category}, with the monoidal structure coming from a natural inclusion map
\begin{equation}\label{eqn-nh-inclusions}
\nh_n\otimes \nh_m\subset \nh_{n+m}.
\end{equation}
It is quite surprising to us that, not until the recent work of Khovanov, Lauda and Rouquier \cite{KL1, Lau1, Rou2}, the following simple relationship between quantum $\mathfrak{sl}_2$ and nilHecke algebras, the latter dating back to the work of Bernstein-Gelfand on Schubert calculus, was not stated anywhere in the literature.

\begin{thm}[\cite{KL1, Lau1, Rou2}]There is an isomorphism of $\Z[q,q^{-1}]$-bialgebras
\[
U_q^-(\mathfrak{sl}_2)\cong K_0(\nh),
\]
with the symbols of induction and restriction functors on the right hand side corresponding to the multiplication and comultiplication on $U_q^-(\mathfrak{sl}_2)$.
\end{thm}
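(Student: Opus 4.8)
The plan is to compute $K_0(\nh)$ by hand, match it basis‑element by basis‑element with the Lusztig integral form of $U_q^-(\mathfrak{sl}_2)$, and then reinterpret the induction and restriction functors attached to \eqref{eqn-nh-inclusions} as the multiplication and comultiplication of divided powers. The linchpin is the structure theorem for the nilHecke algebra: $\nh_n$ is graded Morita equivalent to the polynomial ring $\sym_n$. Concretely, using the relations \eqref{eqn-NH-relation} one verifies that the degree‑zero element $e_n := x_1^{n-1}x_2^{n-2}\cdots x_{n-1}\,\tau_{w_0}$ — with $\tau_{w_0}$ the product of $\tau_i$'s along a reduced word for the longest permutation — is idempotent, via the identity $\tau_{w_0}\,(x_1^{n-1}\cdots x_{n-1})\,\tau_{w_0}=\tau_{w_0}$ (itself a consequence of $\sym_n$‑linearity of $\tau_{w_0}$ and $\tau_{w_0}(x_1^{n-1}\cdots x_{n-1})=1$), that $\nh_n e_n\nh_n=\nh_n$, and that $e_n\nh_n e_n\cong\sym_n$. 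Since $\sym_n\cong\Bbbk[e_1,\dots,e_n]$ is graded local, it has a unique graded‑indecomposable projective up to shift; transporting along the Morita equivalence, $P_n:=\nh_n e_n$ is the unique graded‑indecomposable projective $\nh_n$‑module up to isomorphism and grading shift, and, as a graded left module, $\nh_n$ is a direct sum of shifted copies of $P_n$ of total graded multiplicity the quantum factorial $[n]!$ (up to an overall shift), reflecting that $\pol_n$ is free over $\sym_n$ of that graded rank.

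Consequently the Grothendieck group of finitely generated projective graded $\nh_n$‑modules is $\Z[q,q^{-1}]\cdot[P_n]$, so $K_0(\nh)=\bigoplus_{n\in\N}\Z[q,q^{-1}]\cdot[P_n]$, a free $\Z[q,q^{-1}]$‑module. On the other side, $U_q^-(\mathfrak{sl}_2)$ in its $\Z[q,q^{-1}]$‑integral (Lusztig) form is free with basis the divided powers $F^{(n)}$. Hence there is a unique $\Z[q,q^{-1}]$‑linear isomorphism $\gamma\colon K_0(\nh)\xrightarrow{\sim}U_q^-(\mathfrak{sl}_2)$ with $\gamma([P_n])=F^{(n)}$; it remains to check that $\gamma$ intertwines the two (co)algebra structures.

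For the multiplication one must show $[\Ind_{n,m}^{n+m}(P_n\boxtimes P_m)]={n+m\brack n}\,[P_{n+m}]$, mirroring $F^{(n)}F^{(m)}={n+m\brack n}\,F^{(n+m)}$. Since $\Ind_{n,m}^{n+m}(P_n\boxtimes P_m)=\nh_{n+m}\otimes_{\nh_n\otimes\nh_m}(P_n\boxtimes P_m)\cong\nh_{n+m}(e_n\otimes e_m)$ is projective, it is a direct sum of shifted copies of $P_{n+m}$, and the multiplicities are read off from the graded rank of $\nh_{n+m}$ as a right $\nh_n\otimes\nh_m$‑module — equivalently, of $\pol_{n+m}$ over $\sym_n\otimes\sym_m$ — which is the Gaussian binomial ${n+m\brack n}$. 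Associativity and unitality of the resulting product on $K_0(\nh)$ follow formally from transitivity of induction. Dually, the comultiplication is induced by the restriction functors $\Res_{n,m}^{n+m}$, and computing $[\Res_{n,m}^{n+m}P_{n+m}]$ amounts to decomposing $(e_n\otimes e_m)\nh_{n+m}$ as a graded $\nh_n\otimes\nh_m$‑module; the resulting $q$‑powers are precisely those appearing in Lusztig's comultiplication $\Delta(F^{(n+m)})=\sum_{a+b=n+m}q^{-ab}\,F^{(a)}\otimes F^{(b)}$, which is an algebra map for the \emph{twisted} multiplication on $U_q^-\otimes U_q^-$ — so the twist on the algebraic side is exactly matched by the grading shifts on the categorical side. (The remaining bialgebra compatibility between product and coproduct can either be checked directly via a Mackey‑type isomorphism $\Res\circ\Ind\cong\bigoplus\Ind\circ\Res$ of functors, or simply deduced from the fact that the target $U_q^-(\mathfrak{sl}_2)$ is already a bialgebra.)

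I expect the main obstacle to be Step 1 together with the grading bookkeeping it feeds: proving that $e_n$ is a primitive idempotent with $e_n\nh_n e_n\cong\sym_n$ rests on the combinatorics of reduced words and divided‑difference operators packaged in \eqref{eqn-NH-relation}, and one must then track grading shifts carefully enough that the $q$‑powers in the induction and restriction decompositions reproduce exactly the quantum binomials and the twist in Lusztig's coproduct. This last point is precisely what upgrades ``isomorphism of algebras and of coalgebras, separately'' to ``isomorphism of bialgebras.'' Once $P_n$ and the freeness statements are in hand, the multiplication side is a routine graded‑rank computation.
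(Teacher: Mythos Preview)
The paper does not give its own proof of this theorem: it is stated with attribution to \cite{KL1, Lau1, Rou2} and no argument is supplied, so there is nothing in the paper to compare your proposal against line by line. That said, your outline is precisely the standard argument from those references: the Morita equivalence $\nh_n\simeq\sym_n$ via the idempotent $e_n=x_1^{n-1}\cdots x_{n-1}\tau_{w_0}$, the resulting identification $K_0(\nh_n)\cong\Z[q,q^{-1}]\cdot[P_n]$, and the matching of $[P_n]\leftrightarrow F^{(n)}$ together with the graded-rank computations for $\Ind$ and $\Res$ reproducing the quantum binomial and Lusztig's twisted coproduct. The only caution is bookkeeping: the precise $q$-shifts in the decomposition $\nh_n\cong P_n^{\oplus[n]!}$ and in $\Res^{n+m}_{n,m}P_{n+m}$ depend on grading conventions (here $\deg x_i=2$, $\deg\tau_i=-2$), so be careful that your normalizations actually yield $q^{-ab}$ rather than $q^{ab}$ or $q^{\pm 2ab}$ in the coproduct. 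Otherwise your proposal is correct and is exactly what the cited sources do.
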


To categorify half of the small quantum $\mathfrak{sl}_2$, we combine this construction with hopfological algebra of $p$-differentials. To do so, define $\pol_n$ to be the $p$-DG algebra with the $p$-differential $\dif(x_i)=x_i^2$ ($i=1,\dots, n$). Following \cite[Section 3]{KQ}, define a $p$-DG module structure on the rank-one $\pol_n$ module $\mathcal{P}_n$ as follows:
\begin{equation}\label{eqn-pdg-poln}
\mathcal{P}_n:=\pol_n\cdot v_0,\quad \quad \dif(v_0)=-\sum_{i=1}^{n}(n-i)x_iv_0.
\end{equation}
Upon restriction, we regard $\mathcal{P}_n$ as a $p$-DG module over $(\sym_n,\dif)$. It is shown in \cite[Section 3]{KQ} that, up to an isomorphism, this is the unique $p$-DG structure on $\pol_n$ over $(\sym_n,\dif)$ that makes the module finite cell in all characteristic $p>0$. Furthermore, it also induces a monoidal-structure preserving (see equation \eqref{eqn-nh-inclusions}) differential on the $\nh$ category. A simple computation identifying $\nh_n\cong \END_{\sym_n}(\mathcal{P}_n)$ as $p$-DG algebras yields the following definition.

\begin{defn} \label{def-p-DG-NH}
The $p$-DG monoidal category is the direct sum of $p$-DG algebras
$$(\nh, \dif):=\oplus_{n\in \N}(\nh_n,\dif),$$
where each $\nh_n$ is equipped with the $p$-differential defined on the generators (see equation \eqref{eqn-NH-relation}) by
\[
\dif(x_i)=x_i^2, \quad \quad \dif(\tau_i)=-x_i\tau_i-\tau_ix_{i+1}.
\]
\end{defn}

\begin{thm}[\cite{KQ}]\label{thm-p-DG-NH}There is an isomorphism of $\mathbb{O}_p$-bialgebras
\[
u^-_{\mathbb{O}_p}\cong K_0(\mathcal{D}(\nh)),
\]
under which $F^{(n)}$ is identified with the symbol $[\mathcal{P}_n]$.
\end{thm}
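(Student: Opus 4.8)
The plan is to pass from the $p$-DG nilHecke algebra to the much smaller $p$-DG algebra of symmetric functions, compute its Grothendieck group, and then transport the bialgebra structure from the characteristic-zero isomorphism $U_q^-(\sltwo)\cong K_0(\nh)$ recalled above. Concretely, I would first use the $p$-DG algebra identification $\nh_n\cong\END_{\sym_n}(\mathcal{P}_n)$ together with the cited fact that $\mathcal{P}_n$ is a finite cell $p$-DG module over $(\sym_n,\dif)$. Being finite cell, $\mathcal{P}_n$ is compact and is filtered by grading shifts of $\sym_n$, hence is a compact generator of $\mathcal{D}(\sym_n,\dif)$ whose $p$-DG endomorphism algebra is $\nh_n$; bar resolutions and Theorem~\ref{thm-qis-functors} then upgrade the adjunction $\mathcal{P}_n\otimes_{\sym_n}^{\mathbf{L}}(-)\dashv\RHOM_{\nh_n}(\mathcal{P}_n,-)$ to mutually inverse equivalences $\mathcal{D}(\sym_n,\dif)\simeq\mathcal{D}(\nh_n)$ carrying $[\sym_n]$ to $[\mathcal{P}_n]$. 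Summing over $n$ yields an isomorphism of $\mathbb{O}_p$-modules $K_0(\mathcal{D}(\nh))\cong\bigoplus_{n\ge 0}K_0(\mathcal{D}(\sym_n,\dif))$.

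The heart of the argument is the computation of each $K_0(\mathcal{D}(\sym_n,\dif))$ as an $\mathbb{O}_p$-module. Since every compact object is built from finite cell modules and each finite cell module is filtered by grading shifts of $\sym_n$, this group is cyclic on $[\sym_n]$, so everything comes down to pinning down its annihilator. For $0\le n\le p-1$ I expect the group to be free of rank one: here one matches the graded multiplicities produced by finite cell filtrations against the classical identity $\sum_{w\in S_n}q^{2\ell(w)}=[n]_{q^2}!$, which is invertible in $\mathbb{O}_p$ when $n<p$, and verifies that no additional relations are forced. For $n\ge p$ one must instead show $[\sym_n]=0$, and \emph{this vanishing is the main obstacle} --- it is the genuinely new root-of-unity input. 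I would attack it by resolving a well-chosen $p$-DG module over $(\sym_n,\dif)$ (for instance the augmentation module $\Bbbk=\sym_n/(e_1,\dots,e_n)$, whose defining ideal is $\dif$-stable since $\dif(e_i)\in(e_1,\dots,e_n)$) by a finite cell complex modeled on the Koszul complex, and observing that its graded Euler characteristic --- a product of factors $1-q^{2k}$, $1\le k\le n$, now including $1-q^{2p}=0$ in $\mathbb{O}_p$ --- forces the relevant class, and hence $[\sym_n]$, to die. This is exactly where the definition of $\mathbb{O}_p$ from \eqref{eqn-aux-ring} does its work, and it is the origin of the finiteness dichotomy.

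Finally, the bialgebra structure on $K_0(\mathcal{D}(\nh))$ is induced by the induction and restriction functors attached to the inclusions $\nh_n\otimes\nh_m\subset\nh_{n+m}$; these are $p$-DG functors and so descend to the derived categories, where they are the $p$-DG counterparts of the functors categorifying multiplication and comultiplication on $U_q^-(\sltwo)$ in the generic statement. Running that computation over $\mathbb{O}_p$ gives $[\mathcal{P}_a]\cdot[\mathcal{P}_b]={a+b \brack a}[\mathcal{P}_{a+b}]$, where the quantum binomial vanishes in $\mathbb{O}_p$ whenever $a,b\le p-1$ and $a+b\ge p$ by the quantum Lucas theorem --- consistently with the vanishing of $K_0(\mathcal{D}(\sym_m,\dif))$ for $m\ge p$ --- and dually for the coproduct. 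Sending $[\mathcal{P}_1]\mapsto F$ and $[\mathcal{P}_n]\mapsto F^{(n)}$ then identifies $K_0(\mathcal{D}(\nh))$, as an $\mathbb{O}_p$-bialgebra, with the subalgebra of $\dot{u}_{\mathbb{O}_p}$ generated by the divided powers $F^{(n)}$ for $0\le n\le p-1$, i.e. with $u^-_{\mathbb{O}_p}$.
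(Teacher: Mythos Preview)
Your reduction to $(\sym_n,\dif)$ breaks down exactly where you need it most, at $n\geq p$. The argument ``$\mathcal{P}_n$ is finite cell, hence filtered by shifts of $\sym_n$, hence a compact generator of $\mathcal{D}(\sym_n,\dif)$'' conflates being \emph{in} the thick subcategory generated by $\sym_n$ with \emph{generating} that subcategory. In fact the $\dif$-stable basis $B_n=\{x_1^{a_1}\cdots x_n^{a_n}:0\le a_i\le n-i\}$ of $\mathcal{P}_n$ over $\sym_n$ spans an $n!$-dimensional $p$-complex which is acyclic whenever $n\geq p$; consequently $\mathcal{P}_n$ itself is acyclic, hence isomorphic to $0$ in $\mathcal{D}(\sym_n,\dif)$, and generates only the zero subcategory. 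Your adjunction $\mathcal{P}_n\otimes^{\mathbf L}_{\sym_n}(-)\dashv\RHOM_{\nh_n}(\mathcal{P}_n,-)$ therefore does not give an equivalence $\mathcal{D}(\sym_n,\dif)\simeq\mathcal{D}(\nh_n)$ for $n\ge p$; indeed $(\sym_n,\dif)$ is a positively graded $p$-DG algebra with $1$ closed and non-exact, so its derived category is nonzero, whereas $\mathcal{D}(\nh_n)=0$.

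This acyclicity of $\Bbbk B_n$ is precisely the paper's mechanism: since $\nh_n\cong\END_{\sym_n}(\mathcal{P}_n)$ and $\mathcal{P}_n\cong\sym_n\otimes\Bbbk B_n$ as $p$-complexes, the $p$-DG algebra $\nh_n$ is itself acyclic for $n\ge p$, hence $\mathcal{D}(\nh_n)=0$, and the relation $F^n=0$ follows immediately. Your Koszul Euler-characteristic argument is also aimed at the wrong target: even granting a finite cell $p$-DG resolution of $\Bbbk$ over $(\sym_n,\dif)$, the identity $[\Bbbk]=\prod_{k=1}^n(1-q^{2k})[\sym_n]$ with a vanishing factor at $k=p$ only kills $[\Bbbk]$, not $[\sym_n]$; nothing in that computation annihilates the generator of the cyclic $\mathbb{O}_p$-module. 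For $0\le n\le p-1$ your Morita reduction is fine and matches the paper's viewpoint, but for the crucial vanishing you should bypass $\sym_n$ entirely and argue, as the paper does, that $\nh_n$ is an acyclic $p$-DG algebra.
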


To see that this is indeed the case, we justify the relation $F^{n}=0$ if $n\geq p$. One can readily see that, as a $p$-DG module over $\sym_n$, $\mathcal{P}_n$ has a $\dif$-stable basis consisting of elements in the set
\[
B_n:=\{x_1^{a_1}\dots x_n^{a_n}|0\leq  a_i\leq n-i\}.
\]
The $\Bbbk$-span of $B_n$ forms an $n!$-dimensional $p$-complex, which is acyclic whenever $n\geq p$. Therefore, since $\nh_n\cong \END_{\sym_n}(\mathcal{P}_n)$, $\nh_n$ is an acyclic $p$-DG algebra whenever $n\geq p$. It follows that $\mathcal{D}(\nh_n)\cong 0$ if $n\geq p$, and this categorical vanishing result lifts the above relation $F^{n}=0$ ($n\geq p$) for the small quantum group.

\subsection{The $p$-DG $2$-category $\mathcal{U}$.}
We seek to ``Drinfeld-double'' the one-half construction of the categorified small quantum group following Khovanov-Lauda and Rouquier \cite{Lau1,KL3,Rou2}. To do so for a generic $q$, one takes two copies of the $\nh$-category, and ``glue'' them together with biadjuntions. More specifically, let us recall the diagrammatic definition of $\mathcal{U}$ introduced by Lauda in \cite{Lau1}.

\begin{defn}\label{def-u-dot} The $2$-category $\mathcal{U}$ is an additive graded $\Bbbk$-linear category whose objects $\lambda$ are elements of the weight lattice of $\mathfrak{sl}_2$. The $1$-morphisms are (direct sums of grading shifts of) composites of the generating $1$-morphisms $\1_{\lambda+2} \mathcal{E} \1_\lambda$ and $\1_\lambda \mathcal{F} \1_{\lambda+2}$, for each $\lambda \in \Z$. Each $\1_{\lambda+2} \mathcal{E} \1_\lambda$ will be drawn the same, regardless of the object $\lambda$.

\begin{align*}
\begin{tabular}{|c|c|c|}
	\hline
	$1$-\textrm{morphism Generator} &
	\begin{DGCpicture}
	\DGCstrand(0,0)(0,1)
	\DGCdot*>{0.5}
	\DGCcoupon*(0.1,0.25)(1,0.75){$^\lambda$}
	\DGCcoupon*(-1,0.25)(-0.1,0.75){$^{\lambda+2}$}
    \DGCcoupon*(-0.25,1)(0.25,1.15){}
    \DGCcoupon*(-0.25,-0.15)(0.25,0){}
	\end{DGCpicture}&
	\begin{DGCpicture}
	\DGCstrand(0,0)(0,1)
	\DGCdot*<{0.5}
	\DGCcoupon*(0.1,0.25)(1,0.75){$^{\lambda+2}$}
	\DGCcoupon*(-1,0.25)(-0.1,0.75){$^\lambda$}
    \DGCcoupon*(-0.25,1)(0.25,1.15){}
    \DGCcoupon*(-0.25,-0.15)(0.25,0){}
	\end{DGCpicture} \\ \hline
	\textrm{Name} & $\1_{\lambda+2}\mathcal{E}\1_\lambda$ & $\1_\lambda\mathcal{F}\1_{\lambda+2}$ \\
	\hline
\end{tabular}
\end{align*}

The weight of any region in a diagram is determined by the weight of any single region. When no region is labeled, the ambient weight is irrelevant.

The $2$-morphisms will be generated by the following pictures.
	
\begin{align*}
\begin{tabular}{|c|c|c|c|c|}
  \hline
  \textrm{Generator} &
  \begin{DGCpicture}
  \DGCstrand(0,0)(0,1)
  \DGCdot*>{0.75}
  \DGCdot{0.45}
  \DGCcoupon*(0.1,0.25)(1,0.75){$^\lambda$}
  \DGCcoupon*(-1,0.25)(-0.1,0.75){$^{\lambda+2}$}
  \DGCcoupon*(-0.25,1)(0.25,1.15){}
  \DGCcoupon*(-0.25,-0.15)(0.25,0){}
  \end{DGCpicture}&
  \begin{DGCpicture}
  \DGCstrand(0,0)(0,1)
  \DGCdot*<{0.25}
  \DGCdot{0.65}
  \DGCcoupon*(0.1,0.25)(1,0.75){$^{\lambda}$}
  \DGCcoupon*(-1,0.25)(-0.1,0.75){$^{\lambda-2}$}
  \DGCcoupon*(-0.25,1)(0.25,1.15){}
  \DGCcoupon*(-0.25,-0.15)(0.25,0){}
  \end{DGCpicture} &
  \begin{DGCpicture}
  \DGCstrand(0,0)(1,1)
  \DGCdot*>{0.75}
  \DGCstrand(1,0)(0,1)
  \DGCdot*>{0.75}
  \DGCcoupon*(1.1,0.25)(2,0.75){$^\lambda$}
  \DGCcoupon*(-1,0.25)(-0.1,0.75){$^{\lambda+4}$}
  \DGCcoupon*(-0.25,1)(0.25,1.15){}
  \DGCcoupon*(-0.25,-0.15)(0.25,0){}
  \end{DGCpicture} &
  \begin{DGCpicture}
  \DGCstrand(0,0)(1,1)
  \DGCdot*<{0.25}
  \DGCstrand(1,0)(0,1)
  \DGCdot*<{0.25}
  \DGCcoupon*(1.1,0.25)(2,0.75){$^\lambda$}
  \DGCcoupon*(-1,0.25)(-0.1,0.75){$^{\lambda-4}$}
  \DGCcoupon*(-0.25,1)(0.25,1.15){}
  \DGCcoupon*(-0.25,-0.15)(0.25,0){}
  \end{DGCpicture} \\ \hline
  \textrm{Degree}  & 2   & 2 & -2 & -2 \\
  \hline
\end{tabular}
\end{align*}

\begin{align*}
\begin{tabular}{|c|c|c|c|c|}
  \hline
  \textrm{Generator} &
  \begin{DGCpicture}
  \DGCstrand/d/(0,0)(1,0)
  \DGCdot*>{-0.25,1}
  \DGCcoupon*(1,-0.5)(1.5,0){$^\lambda$}
  \DGCcoupon*(-0.25,0)(1.25,0.15){}
  \DGCcoupon*(-0.25,-0.65)(1.25,-0.5){}
  \end{DGCpicture} &
  \begin{DGCpicture}
  \DGCstrand/d/(0,0)(1,0)
  \DGCdot*<{-0.25,2}
  \DGCcoupon*(1,-0.5)(1.5,0){$^\lambda$}
  \DGCcoupon*(-0.25,0)(1.25,0.15){}
  \DGCcoupon*(-0.25,-0.65)(1.25,-0.5){}
  \end{DGCpicture}&
  \begin{DGCpicture}
  \DGCstrand(0,0)(1,0)/d/
  \DGCdot*<{0.25,1}
  \DGCcoupon*(1,0)(1.5,0.5){$^\lambda$}
  \DGCcoupon*(-0.25,0.5)(1.25,0.65){}
  \DGCcoupon*(-0.25,-0.15)(1.25,0){}
  \end{DGCpicture}&
  \begin{DGCpicture}
  \DGCstrand(0,0)(1,0)/d/
  \DGCdot*>{0.25,2}
  \DGCcoupon*(1,0)(1.5,0.5){$^\lambda$}
  \DGCcoupon*(-0.25,0.5)(1.25,0.65){}
  \DGCcoupon*(-0.25,-0.15)(1.25,0){}
  \end{DGCpicture}  \\ \hline
  \textrm{Degree} & $1+\lambda$ & $1-\lambda$ & $1+\lambda$ & $1-\lambda$ \\
  \hline
\end{tabular}
\end{align*}
\end{defn}

We will give a list of relations shortly, after we discuss some notation. For a product of $m$ dots on a single strand, we draw a single dot labeled by $m$. Here is the case when $m=4$.

\begin{align*}
\begin{DGCpicture}[scale=.75]
\DGCstrand(0,0)(0,2)
\DGCdot{.3}
\DGCdot{.65}
\DGCdot{1}
\DGCdot{1.35}
\DGCdot*>{1.85}
\end{DGCpicture}
~=~
\begin{DGCpicture}[scale=.75]
\DGCstrand(0,0)(0,2)
\DGCdot{1}[r]{$^4$}
\DGCdot*>{1.85}
\end{DGCpicture}
\end{align*}

A \emph{closed diagram} is a diagram without boundary, constructed from the generators above. The simplest non-trivial closed diagram is a \emph{bubble}, which is a closed diagram without any other closed diagrams inside. Bubbles can be oriented clockwise or counter-clockwise.
\begin{align*}
\begin{DGCpicture}
\DGCbubble(0,0){0.5}
\DGCdot*<{0.25,L}
\DGCdot{-0.25,R}[r]{$_m$}
\DGCdot*.{0.25,R}[r]{$\lambda$}
\end{DGCpicture}
\qquad \qquad
\begin{DGCpicture}
\DGCbubble(0,0){0.5}
\DGCdot*>{0.25,L}
\DGCdot{-0.25,R}[r]{$_m$}
\DGCdot*.{0.25,R}[r]{$\lambda$}
\end{DGCpicture}
\end{align*}

A simple calculation shows that the degree of a bubble with $m$ dots in a region labeled $\lambda$ is $2(m+1-\lambda)$ if the bubble is clockwise, and $2(m+1+\lambda)$ if the bubble is counter-clockwise. Instead of keeping track of the number $m$ of dots a bubble has, it will be much more illustrative to keep track of the degree of the bubble, which is in $2\Z$. We will use the following shorthand to refer to a bubble of degree $2k$.

\begin{align*}
\bigcwbubble{$k$}{$\lambda$}
\qquad \qquad
\bigccwbubble{$k$}{$\lambda$}
\end{align*}
This notation emphasizes the fact that bubbles have a life of their own, independent of their presentation in terms of caps, cups, and dots.

Note that $\lambda$ can be any integer, but $m\geq 0$ because it counts dots. Therefore, we can only construct a clockwise (resp. counter-clockwise) bubble of degree $k$ when $k \geq 1-\lambda$ (resp. $k \geq 1+\lambda$). These are called \emph{real bubbles}. Following Lauda, we also allow bubbles drawn as above with arbitrary $k \in \Z$. Bubbles with $k$ outside of the appropriate range are not yet defined in terms of the generating maps; we call these \emph{fake bubbles}. One can express any fake bubble in terms of real bubbles (see Remark \ref{rmk-inf-Grass-relation}).

Now we list the relations. Whenever the region label is omitted, the relation applies to all ambient weights.

\begin{itemize}
\item[(i)] {\bf Biadjointness relations.}
\begin{subequations} \label{biadjoint}
\begin{align} \label{biadjoint1}
\begin{DGCpicture}[scale=0.85]
\DGCstrand(0,0)(0,1)(1,1)(2,1)(2,2)
\DGCdot*>{0.75,1}
\DGCdot*>{1.75,1}
\end{DGCpicture}
~=~
\begin{DGCpicture}[scale=0.85]
\DGCstrand(0,0)(0,2)
\DGCdot*>{0.5}
\end{DGCpicture}
~=~
\begin{DGCpicture}[scale=0.85]
\DGCstrand(2,0)(2,1)(1,1)(0,1)(0,2)
\DGCdot*>{0.75}
\DGCdot*>{1.75}
\end{DGCpicture}
\qquad \qquad
\begin{DGCpicture}[scale=0.85]
\DGCstrand(0,0)(0,1)(1,1)(2,1)(2,2)
\DGCdot*<{0.75,1}
\DGCdot*<{1.75,1}
\end{DGCpicture}
~=~
\begin{DGCpicture}[scale=0.85]
\DGCstrand(0,0)(0,2)
\DGCdot*<{0.5}
\end{DGCpicture}
~=~
\begin{DGCpicture}[scale=0.85]
\DGCstrand(2,0)(2,1)(1,1)(0,1)(0,2)
\DGCdot*<{0.75}
\DGCdot*<{1.75}
\end{DGCpicture}
\end{align}

\begin{align} \label{biadjointdot}
\begin{DGCpicture}
\DGCstrand(0,0)(0,.5)(1,.5)/d/(1,0)/d/
\DGCdot*<{1}
\DGCdot{.3,1}
\end{DGCpicture}
~=~
\begin{DGCpicture}
\DGCstrand(0,0)(0,.5)(1,.5)/d/(1,0)/d/
\DGCdot*<{1}
\DGCdot{.3,2}
\end{DGCpicture}
\qquad \qquad \qquad
\begin{DGCpicture}
\DGCstrand(0,0)(0,.5)(1,.5)/d/(1,0)/d/
\DGCdot*>{1}
\DGCdot{.3,1}
\end{DGCpicture}
~=~
\begin{DGCpicture}
\DGCstrand(0,0)(0,.5)(1,.5)/d/(1,0)/d/
\DGCdot*>{1}
\DGCdot{.3,2}
\end{DGCpicture}
\end{align}

\begin{align} \label{biadjointcrossing}
\begin{DGCpicture}[scale=0.75]
\DGCstrand(0,0)(1,1)/u/(2,1)/d/(2,0)/d/
\DGCdot*<{1.5}
\DGCstrand(1,0)(0,1)/u/(3,1)/d/(3,0)/d/
\DGCdot*<{2.5}
\end{DGCpicture}
~=~
\begin{DGCpicture}[scale=0.75]
\DGCstrand(0,0)(0,1)(3,1)/d/(2,0)/d/
\DGCdot*<{2.5}
\DGCstrand(1,0)(1,1)(2,1)/d/(3,0)/d/
\DGCdot*<{1.5}
\end{DGCpicture}
\qquad \qquad
\begin{DGCpicture}[scale=0.75]
\DGCstrand(0,0)(1,1)/u/(2,1)/d/(2,0)/d/
\DGCdot*>{1.5}
\DGCstrand(1,0)(0,1)/u/(3,1)/d/(3,0)/d/
\DGCdot*>{2.5}
\end{DGCpicture}
~=~
\begin{DGCpicture}[scale=0.75]
\DGCstrand(0,0)(0,1)(3,1)/d/(2,0)/d/
\DGCdot*>{2.5}
\DGCstrand(1,0)(1,1)(2,1)/d/(3,0)/d/
\DGCdot*>{1.5}
\end{DGCpicture}
\end{align}
\end{subequations}

\item[(ii)] {\bf Positivity and Normalization of bubbles.} Positivity states that all bubbles (real or fake) of negative degree should be zero.
\begin{subequations} \label{negzerobubble}
\begin{align} \label{negbubble}
\bigcwbubble{$k$}{}~=~0~=~\bigccwbubble{$k$}{}
\qquad
\textrm{if}~k<0,
\end{align}

Normalization states that degree $0$ bubbles are equal to the empty diagram (i.e. the identity $2$-morphism of the identity $1$-morphism).

\begin{align} \label{zerobubble}
\bigcwbubble{$0$}{}~=~1~=~\bigccwbubble{$0$}{}~.
\end{align}
\end{subequations}

\item[(iii)] {\bf NilHecke relations.} The upward pointing strands satisfy nilHecke relations as in \eqref{eqn-NH-relation}. Note that, diagrammatically, far-away commuting elements become isotopy relations and are thus built in by default.
\begin{subequations} \label{NHrels}
\begin{align}
\begin{DGCpicture}
\DGCstrand(0,0)(1,1)(0,2)
\DGCdot*>{2}
\DGCstrand(1,0)(0,1)(1,2)
\DGCdot*>{2}
\end{DGCpicture}
=0\ , \quad \quad
\RIII{L}{$0$}{$0$}{$0$}{no} = \RIII{R}{$0$}{$0$}{$0$}{no},
\label{NHrelR3}
\end{align}
\begin{align}
\crossing{$0$}{$0$}{$1$}{$0$}{no} - \crossing{$0$}{$1$}{$0$}{$0$}{no} = \twolines{$0$}{$0$}{no} = \crossing{$1$}{$0$}{$0$}{$0$}{no} -
\crossing{$0$}{$0$}{$0$}{$1$}{no}. \label{NHreldotforce}
\end{align}
\end{subequations}
\item[(iv)] {\bf Reduction to bubbles.} The following equalities hold for all $\lambda \in \Z$.
\begin{subequations} \label{bubblereduction}
\begin{align}
\curl{R}{U}{$\lambda$}{no}{$0$} = -\sum_{a+b=-\lambda} \oneline{$b$}{$\lambda$} \bigcwbubble{$a$}{}\ ,
\end{align}
\begin{align}
\curl{L}{U}{$\lambda$}{no}{$0$} = \sum_{a+b=\lambda} \bigccwbubble{$a$}{$\lambda$} \oneline{$b$}{no} \ .
\end{align}
\end{subequations}
These sums only take values for $a,b \geq 0$. Therefore, when $\lambda \neq 0$, either the right curl or the left curl is zero.
\item[(v)] {\bf Identity decomposition.} The following equations hold for all $\lambda \in \Z$.
\begin{subequations} \label{IdentityDecomp}
\begin{align}
\begin{DGCpicture}
\DGCstrand(0,0)(0,2)
\DGCdot*>{1}
\DGCdot*.{1.25}[l]{$\lambda$}
\DGCstrand(1,0)(1,2)
\DGCdot*<{1}
\DGCdot*.{1.25}[r]{$\lambda$}
\end{DGCpicture}
~=~-~
\begin{DGCpicture}
\DGCstrand(0,0)(1,1)(0,2)
\DGCdot*>{0.25}
\DGCdot*>{1}
\DGCdot*>{1.75}
\DGCstrand(1,0)(0,1)(1,2)
\DGCdot*.{1.25}[l]{$\lambda$}
\DGCdot*<{0.25}
\DGCdot*<{1}
\DGCdot*<{1.75}
\end{DGCpicture}
~+~
\sum_{a+b+c=\lambda-1}~
\cwcapbubcup{$a$}{$b$}{$c$}{$\lambda$} \ , \label{IdentityDecompPos}
\end{align}
\begin{align}
\begin{DGCpicture}
\DGCstrand(0,0)(0,2)
\DGCdot*<{1}
\DGCdot*.{1.25}[l]{$\lambda$}
\DGCstrand(1,0)(1,2)
\DGCdot*>{1}
\DGCdot*.{1.25}[r]{$\lambda$}
\end{DGCpicture}
~=~-~
\begin{DGCpicture}
\DGCstrand(0,0)(1,1)(0,2)
\DGCdot*<{0.25}
\DGCdot*<{1}
\DGCdot*<{1.75}
\DGCstrand(1,0)(0,1)(1,2)
\DGCdot*.{1.25}[l]{$\lambda$}
\DGCdot*>{0.25}
\DGCdot*>{1}
\DGCdot*>{1.75}
\end{DGCpicture}
~+~
\sum_{a+b+c=-\lambda-1}~
\ccwcapbubcup{$a$}{$b$}{$c$}{$\lambda$} \ . \label{IdentityDecompNeg}
\end{align}
\end{subequations}
The sum in the first equality vanishes for $\lambda \leq 0$, and the sum in the second equality vanishes for $\lambda \geq 0$.

The terms on the right hand side form a collection of orthogonal idempotents. 
\end{itemize}

\begin{rem}[Infinite Grassmannian relations]\label{rmk-inf-Grass-relation}  These relations, which follow from the above defining relations, can be expressed most succinctly in terms of generating functions.

\begin{equation}\label{eqn-infinite-Grassmannian}
\left( \cwbubble{$0$}{}+t~\cwbubble{$1$}{}+t^2~\cwbubble{$2$}{}+\ldots \right) \cdot
\left( \ccwbubble{$0$}{}+t~\ccwbubble{$1$}{}+t^2~\ccwbubble{$2$}{}+\ldots \right)  =  1~.
\end{equation}

The cohomology ring of the ``infinite dimensional Grassmannian" is the ring $\Lambda$ of symmetric functions. Inside this ring, there is an analogous relation $e(t)h(t)=1$, where $e(t) = \sum_{i \ge 0} (-1)^i e_i t^i$ is the total Chern class of the tautological bundle, and $h(t) = \sum_{i \ge 0} h_i t^i$ is the total Chern class of the dual bundle. Lauda has proved that the bubbles in a single region generate an algebra inside $\mathcal{U}$ isomorphic to $\Lambda$.

Looking at the homogeneous component of degree $m$, we have the following equation.
\begin{align}
\sum_{a + b = m} \bigcwbubble{$a$}{} \bigccwbubble{$b$}{} = \delta_{m,0} \label{infgrass}
\end{align}
Because of the positivity of bubbles relation, this equation holds true for any $m \in \Z$, and the sum can be taken over all $a,b \in \Z$.

Using these equations one can express all (positive degree) counter-clockwise bubbles in terms of clockwise bubbles, and vice versa.
Consequentially, all fake bubbles can be expressed in terms of real bubbles.
\end{rem}

\begin{defn}\label{def-special-dif}
Let $\dif$ be the derivation defined on the $2$-morphism generators of $\mathcal{U}$ as follows.
\[
\begin{array}{c}
\dif \left( \onelineshort{$1$}{no} \right) =  \onelineshort{$2$}{no} , \quad \quad \quad
\dif \left( \crossing{$0$}{$0$}{$0$}{$0$}{no} \right) =  \twolines{$0$}{$0$}{no} -2 \crossing{$1$}{$0$}{$0$}{$0$}{no} ,\\ \\
\dif \left( \onelineDshort{$1$}{no} \right) =  \onelineDshort{$2$}{no} , \quad \quad \quad
\dif \left( \crossingD{$0$}{$0$}{$0$}{$0$}{no} \right) = - \twolinesD{$0$}{$0$}{no} -2 \crossingD{$0$}{$1$}{$0$}{$0$}{no},
\end{array}
\]
\[
\begin{array}{c}
\dif \left( \cappy{CW}{$0$}{no}{$\lambda$} \right)  =  \cappy{CW}{$1$}{no}{$\lambda$}- \cappy{CW}{$0$}{CW}{$\lambda$},\quad \quad \quad
\dif \left( \cuppy{CW}{$0$}{no}{$\lambda$} \right) = (1-\lambda) \cuppy{CW}{$1$}{no}{$\lambda$},\\ \\
\dif \left( \cuppy{CCW}{$0$}{no}{$\lambda$} \right) =  \cuppy{CCW}{$1$}{no}{$\lambda$} + \cuppy{CCW}{$0$}{CW}{$\lambda$},\quad \quad \quad
\dif \left( \cappy{CCW}{$0$}{no}{$\lambda$} \right) = (\lambda+1 ) \cappy{CCW}{$1$}{no}{$\lambda$}.
\end{array}
\]
\end{defn}

\begin{thm}[\cite{EQ1}]\label{thm-itsanalghom}
There is an isomorphism of $\mathbb{O}_p$-algebras
$$\dot{u}_{\mathbb{O}_p} \lra K_0(\mathcal{D}(\mathcal{U})),$$
sending $1_{\lambda+2}E1_\lambda $ to $[\1_{\lambda+2}\mathcal{E\1_\lambda}]$ and $1_\lambda F 1_{\lambda+2} $ to $ [\1_\lambda \mathcal{F}\1_{\lambda+2}]$ for any weight $\lambda \in \Z$.
\end{thm}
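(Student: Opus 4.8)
The strategy follows Lauda's decategorification of $\dot U_q(\mathfrak{sl}_2)$ in the generic case \cite{Lau1}, with the new root-of-unity input being Theorem~\ref{thm-p-DG-NH} together with the requirement that all relevant $1$-morphism decompositions be compatible with the $p$-differential. First I would check that Definition~\ref{def-special-dif} produces a well-defined degree-two $p$-nilpotent derivation $\dif$ on $\mathcal{U}$: one verifies that $\dif$ annihilates each defining relation (i)--(v) and that $\dif^p=0$ on $2$-morphism spaces. Since a derivation on a $2$-category descends to its graded additive Karoubi envelope $\dot{\mathcal{U}}$ (which has the same derived category up to idempotent completion), this makes each $\1_\mu\mathcal{U}\1_\lambda$ a $p$-DG algebra and equips the divided-power $1$-morphisms $\mathcal{E}^{(n)}\1_\lambda$, $\mathcal{F}^{(n)}\1_\lambda$ with $p$-DG structures. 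The essential point, and the reason for the precise coefficients $1-\lambda$, $\lambda+1$ on cups and caps, is the following lemma to be isolated next: the $2$-morphisms witnessing Lauda's $\mathfrak{sl}_2$-decompositions can be organized into $\dif$-stable one-sided (``fantastic'') filtrations as in Example~\ref{eg-finite-cell}, so that in $\mathcal{H}(\mathcal{U})$, hence in $\mathcal{D}(\mathcal{U})$, one has $\mathcal{E}\mathcal{F}\1_\lambda\cong\mathcal{F}\mathcal{E}\1_\lambda\oplus(\text{$[\lambda]$ shifted copies of }\1_\lambda)$ for $\lambda\ge 0$ and its mirror for $\lambda\le 0$; moreover, by the $p$-DG nilHecke computation of \cite{KQ} underlying Definition~\ref{def-p-DG-NH}, $\mathcal{E}^{(n)}\1_\lambda$ (resp. $\mathcal{F}^{(n)}\1_\lambda$) carries a finite cell $p$-DG structure which is a $\dif$-stable summand of $\mathcal{E}^n\1_\lambda$ (resp. $\mathcal{F}^n\1_\lambda$) for $0\le n\le p-1$.

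With the lemma in hand, I would define the $\mathbb{O}_p$-linear map $\gamma\colon\dot u_{\mathbb{O}_p}\to K_0(\mathcal{D}(\mathcal{U}))$ by $1_\lambda\mapsto[\1_\lambda]$, $1_{\lambda+2}E1_\lambda\mapsto[\1_{\lambda+2}\mathcal{E}\1_\lambda]$, $1_\lambda F1_{\lambda+2}\mapsto[\1_\lambda\mathcal{F}\1_{\lambda+2}]$, and check that it respects the defining relations (1)--(4) of $\dot u_{\mathbb{O}_p}$. Relations (1)--(2) are immediate from orthogonality of the $[\1_\lambda]$ and the weight conventions; relation (3), the commutator $EF1_\lambda-FE1_\lambda=[\lambda]1_\lambda$, is precisely the image in $K_0$ of the lemma's two decompositions; and relation (4), $E^p=F^p=0$, follows from $\mathcal{E}^p\1_\lambda\cong 0\cong\mathcal{F}^p\1_\lambda$ in $\mathcal{D}(\mathcal{U})$. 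For this last vanishing I would use that $\End_{\mathcal{U}}(\mathcal{E}^p\1_\lambda)$ contains, as a $\dif$-stable unital subalgebra, the $p$-DG nilHecke algebra $(\nh_p,\dif)$ of Definition~\ref{def-p-DG-NH} (indeed $\dif$ restricts on the upward strands to the nilHecke differential by construction); but $(\nh_p,\dif)$ is acyclic, as already recorded after Theorem~\ref{thm-p-DG-NH}, so its unit $\id_{\mathcal{E}^p\1_\lambda}$ lies in the image of $\dif^{p-1}$, whence $\End_{\mathcal{U}}(\mathcal{E}^p\1_\lambda)$ is itself acyclic and $\mathcal{E}^p\1_\lambda\cong 0$ already in $\mathcal{H}(\mathcal{U})$. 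Finally, $\gamma$ is $\mathbb{O}_p$-linear because multiplication by $q$ on $K_0$ is, by definition \eqref{eqn-aux-ring-2}, the grading shift $\{1\}$, which matches the action of $q$ on $\dot u_{\mathbb{O}_p}$.

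Next, surjectivity. The triangulated category $\mathcal{D}(\mathcal{U})$ is compactly generated by the representable $p$-DG modules attached to the $1$-morphisms of $\mathcal{U}$, so $K_0(\mathcal{D}(\mathcal{U}))$ is generated over $\mathbb{O}_p$ by the classes $[\1_\mu x\1_\lambda]$ with $x$ a word in $\mathcal{E}$ and $\mathcal{F}$. Iterating the lemma and using $\mathcal{E}^p\1_\lambda\cong 0\cong\mathcal{F}^p\1_\lambda$ rewrites each such class, inside $K_0$, as an $\mathbb{O}_p$-combination of the classes $[\1_\mu\mathcal{F}^{(b)}\mathcal{E}^{(a)}\1_\lambda]$ with $0\le a,b\le p-1$ --- the $p$-DG counterpart of Lauda's classification of indecomposable $1$-morphisms --- and each of these equals $\gamma\big(F^{(b)}E^{(a)}1_\lambda\big)$ and hence lies in the image. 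Thus $\gamma$ is surjective.

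The main obstacle is injectivity, i.e. a matching lower bound on $K_0(\mathcal{D}(\1_\mu\mathcal{U}\1_\lambda))$ equal to the rank of the weight space $\1_\mu\dot u_{\mathbb{O}_p}\1_\lambda$. The plan is to establish a \emph{triangular decomposition} at the $p$-DG level: the $p$-DG functor $(\nh_b,\dif)\otimes(\nh_a,\dif)\to\1_\mu\mathcal{U}\1_\lambda$ given by horizontal composition on $\End_{\mathcal{U}}(\mathcal{F}^b\mathcal{E}^a\1_\lambda)$ should realize $\1_\mu\mathcal{U}\1_\lambda$ as a $p$-DG module admitting a finite $\dif$-stable filtration whose subquotients are, up to grading shift, finite cell $p$-DG modules induced from the nilHecke pieces, compatibly with the differential of Definition~\ref{def-p-DG-NH}. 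Feeding this through Theorem~\ref{thm-p-DG-NH} --- which identifies $K_0(\mathcal{D}(\nh))\cong u^-_{\mathbb{O}_p}$ with $[\mathcal{P}_n]\leftrightarrow F^{(n)}$, and dually for $u^+_{\mathbb{O}_p}$ --- then yields $\mathbb{O}_p$-linearly independent classes in $K_0(\mathcal{D}(\1_\mu\mathcal{U}\1_\lambda))$ indexed by a PBW basis of $\1_\mu\dot u_{\mathbb{O}_p}\1_\lambda$. Combined with the surjectivity above, these classes must form a basis, whence $\gamma$ is an isomorphism of $\mathbb{O}_p$-algebras, the multiplicativity being built in through the identification of the product on $K_0(\mathcal{D}(\mathcal{U}))$ with horizontal composition of $1$-morphisms. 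The genuinely delicate step --- where, unlike the generic case, one has no characteristic-zero semisimplicity to fall back on --- is to produce this triangular filtration explicitly and to verify that its subquotients satisfy property~(P)/finite cell, so that their $K_0$-classes exist, behave additively along the filtration, and are computed by Theorem~\ref{thm-p-DG-NH}.
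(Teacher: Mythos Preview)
The paper does not contain a proof of this theorem: it is stated with a citation to \cite{EQ1} and no argument is given here. So there is no ``paper's own proof'' against which to compare your proposal.

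That said, your outline is broadly the strategy carried out in \cite{EQ1}: verify that $\dif$ is well-defined on $\mathcal{U}$, upgrade Lauda's $\mathfrak{sl}_2$-isomorphisms to $\dif$-stable one-sided filtrations (the \emph{fantastic filtrations} of \cite{EQ1}), deduce the commutator relation in $K_0$, and invoke the acyclicity of $(\nh_p,\dif)$ for the nilpotence relation. One correction: your parenthetical claim that $\dot{\mathcal{U}}$ ``has the same derived category up to idempotent completion'' is false in the $p$-DG setting, and the paper says so explicitly in the paragraph immediately following the theorem --- the functor $\mathcal{J}\colon\mathcal{D}(\mathcal{U})\to\mathcal{D}(\dot{\mathcal{U}})$ is only a fully faithful embedding, not an equivalence, and this is precisely what categorifies the inclusion $\dot u_{\mathbb{O}_p}\hookrightarrow\dot U_{\mathbb{O}_p}$. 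This does not damage your argument, since the theorem concerns $\mathcal{D}(\mathcal{U})$ and you never actually use the Karoubi envelope, but the remark should be deleted.

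For injectivity, your plan via a $p$-DG triangular decomposition is in the right spirit, but in \cite{EQ1} the size of $K_0$ is controlled not by an abstract PBW argument but by showing that the relevant $2$-morphism spaces $\1_\mu\mathcal{U}\1_\lambda$ are \emph{positive} $p$-DG algebras in the sense recalled at the very end of this paper, and then invoking the general machinery (also from \cite{EQ1}) that computes $K_0$ of the compact derived category of a positive $p$-DG algebra in terms of the split Grothendieck group of its degree-zero part. Your filtration idea would need that machinery anyway to conclude that the classes of the subquotients are linearly independent; it does not follow formally from Theorem~\ref{thm-p-DG-NH} alone.
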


Without the presence of the differential, Khovanov, Lauda, Mackaay and Sto\v{s}i\'{c} (KLMS) has given a diagrammatic presentation of the idempotent completion $\dot{\mathcal{U}}$ of $\mathcal{U}$. By construction, $\dot{\mathcal{U}}$ is Morita equivalent to ${\mathcal{U}}$, and they both categorify quantum $\mathfrak{sl}_2$ at generic $q$ values.

The KLMS calculus $\dot{\mathcal{U}}$ has been equipped with a compatible differential derived from $\mathcal{U}$. We will refer the reader to \cite{EQ2} for the details. However, unlike the abelian case, the $p$-DG derived categories are drastically different! There is a functor
\[
\mathcal{J}: \mathcal{H}(\mathcal{U}) \lra \mathcal{H}(\dot{\mathcal{U}}),
\]
which is given by tensor product with $\dot{\mc{U}}$ regarded as a $(\dot{\mathcal{U}},{\mathcal{U}})$-bimodule with a compatible differential. This functor, not surprisingly, is an equivalence of homotopy categories. However, under localization, it is no longer an equivalence, but instead is a fully-faithful embedding of derived categories:
\[
\mathcal{J}: \mathcal{D}(\mathcal{U}) \lra \mathcal{D}(\dot{\mathcal{U}}).
\]
This derived embedding categorifies the embedding of $\dot{u}_{\mathbb{O}_p}$ into the BLM form $\dot{U}_{\mathbb{O}_p}$ for quantum $\mathfrak{sl}(2)$, the latter generated by all divided powers $E^{(n)},~F^{(n)}$ ($n\in \N$).

\begin{rem}
When $p=2$, the constructions in this section can be lifted to characteristic zero, using the usual DG theory. The nilHecke category should then be replaced by the odd nilHecke algebra of \cite{EKL}. We refer the reader to \cite{EllisQ} for the details. But a natural question still remains: Can one lift the constructions in positive characteristics to characteristic zero? A fundamental problem one needs to tackle is to have a characteristic zero categorification of the ring of cyclotomic integers, and then build nilHecke-like objects in that category.
\end{rem}

\section{A categorification of the Jones-Wenzl projector}\label{sec-jones-wenzl}

\subsection{Categorification at a generic $q$.}
We present here a special case of a functor categorifying the Jones-Wenzl projector first considered in ~\cite{FSS} in the context of category $\mathcal{O}$.  
A further specialization has been studied in greater detail in ~\cite{StroppelSussan}, where the Koszul dual functor is computed and shown to be related to the independent constructions of Cooper and Krushkal ~\cite{CoKr} and Rozansky ~\cite{RoJW}.
What follows below is the construction for the category categorifying the second lowest weight space.
Details for other weight spaces will appear in \cite{StroppelSussanColorJ}.

First we ignore the differential thus categorifying the Jones-Wenzl projector for generic $q$.
For ease of notation set
\begin{equation*}
C:= \Bbbk[x]/(x^n)
\hspace{.5in}
A:=A_n^!.
\end{equation*}

\begin{prop}
\label{K0generic}
The Grothendieck groups of finitely generated graded modules over $A$ and $C$ categorify weight spaces of $V_1^{\otimes n}$ and $V_n$ respectively.
\begin{enumerate}
\item $K_0(A\gmod) \cong V_1^{\otimes n} [-n+2]$.
\item $K_0(C\gmod) \cong V_n [-n+2]$.
\end{enumerate}
\end{prop}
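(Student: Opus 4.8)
The plan is to establish both Grothendieck group isomorphisms by identifying the relevant categories with module categories over well-understood algebras and matching bases.

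First I would handle part (2), which is the simpler case. The algebra $C = \Bbbk[x]/(x^n)$ is a graded local ring whose unique graded simple module is $\Bbbk$ concentrated in degree zero. Hence $K_0(C\gmod)$ is a free $\Z[q,q^{-1}]$-module of rank one, generated by the class of the simple module, where $q$ acts by the grading shift $\{1\}$. I would then match this with $V_n[-n+2]$, the appropriate weight space of the irreducible $\dot{u}$-module $V_n$, which is also one-dimensional over the base ring. To pin down the identification so that it is compatible with the categorical $\mathfrak{sl}_2$-action (or at least the Temperley--Lieb/projector structure), one identifies the class of the graded simple module $\Bbbk$, with a suitable grading normalization, with the basis vector $v_{(n/2)-1}$ (or its analogue) spanning $V_n[-n+2]$. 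The grading conventions are dictated by the requirement that $C \cong \mH^*(\mathbb{P}^{n-1})$ and that $p_n$ factors through $V_n$.

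For part (1), the key input is the identification of $A = A_n^!$ via the quiver presentation in Definition~\ref{def-algebra-An}. Since $A$ is a finite-dimensional algebra, $K_0(A\gmod)$ is free over $\Z[q,q^{-1}]$ on the classes $[L_i]$ of the $n$ graded simple modules $L_i$ ($i = 1,\dots,n$), one for each vertex of the quiver $Q_n$, each placed in degree zero. On the other side, $V_1^{\otimes n}[-n+2]$ is free of rank $n$ over $\Z[q,q^{-1}]$ with the basis $\{l_1,\dots,l_n\}$ exhibited in the excerpt. The isomorphism is then defined by $[L_i] \mapsto l_i$. To see this is the "correct" isomorphism — i.e. the one under which the categorified structures match — I would check that the projective modules $P_i$ (indecomposable projective covers of $L_i$) have graded composition series whose classes reproduce, via the Cartan matrix of $A$, the expansion of the standard tensor-product basis $v_{i_1}\otimes\cdots\otimes v_{i_n}$ in terms of the $l_r$; concretely, using that $A$ is Koszul with the explicit relations $(1|2|1)=0$ and $(i|i-1|i)=(i|i+1|i)$, the graded dimensions of $\mathrm{Hom}$-spaces between projectives are computable and match the Shapovalov-type pairing on the weight space. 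Alternatively, and perhaps more cleanly, I would invoke the result of \cite{BFK} that $\bigoplus_{i=0}^n \mathcal{O}_{i,n-i}(\mathfrak{gl}_n)$ categorifies $V_1^{\otimes n}$ together with the identification of $A_n^!$ with the endomorphism algebra of a projective generator of $\mathcal{O}_{1,n-1}(\mathfrak{gl}_n)$ mentioned right after Definition~\ref{def-algebra-An}; restricting the known categorification statement to this single block immediately yields $K_0(A\gmod) \cong V_1^{\otimes n}[-n+2]$ as a $\Z[q,q^{-1}]$-module with the bases matched up.

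The main obstacle, I expect, is not the rank count — which is immediate on both sides — but bookkeeping the grading shifts so that the two isomorphisms are compatible with each other and with the Temperley--Lieb action \eqref{templiebdualcan1} and the inclusion/projection $V_1^{\otimes n} \to V_n \to V_1^{\otimes n}$ that the Jones--Wenzl projector factors through. In particular one must check that the functors relating $A\gmod$ and $C\gmod$ (induction/restriction along an appropriate (bi)module, or the zipper/inclusion functors from category $\mathcal{O}$) induce on Grothendieck groups exactly the maps $V_1^{\otimes n}[-n+2] \to V_n[-n+2] \to V_1^{\otimes n}[-n+2]$, with the degree shift $[-n+2]$ accounting for the normalization of the highest weight. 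Verifying this compatibility is a finite but somewhat delicate computation with the explicit bases $\{l_r\}$ and the basis of $V_n$; once it is in place, the statement of the proposition follows. I would refer to \cite{StroppelSussanColorJ} for the details of this compatibility, as the excerpt indicates.
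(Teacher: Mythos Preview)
Your argument is correct, but it does substantially more than either the statement or the paper requires. The proposition as stated is only an isomorphism of $\Z[q,q^{-1}]$-modules, with no compatibility with the Temperley--Lieb action or the projector asserted; the paper's proof is accordingly just the rank count you yourself call ``immediate'': the $n$ indecomposable projectives $A(i)$ give $K_0(A\gmod)$ rank $n$, and $C$ being graded local gives $K_0(C\gmod)$ rank $1$. All of your discussion of matching $[L_i]\mapsto l_i$, the Cartan matrix, grading normalizations, and the BFK result is extra---useful context for why this is the right identification downstream, but not part of proving the proposition itself.
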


\begin{proof}
It is easy to see that the projective modules $A(i)$ for $i=1,\ldots,n$ are mutually non-isomorphic and exhaust the list of indecomposable $A$-modules.

The second part is even easier since $C$ is a graded local algebra.
\end{proof}

The connection between these categories is given by the Soergel functor and its adjoint.
It is easy to see that $\END_{A}(A(n)) \cong C$.

Consider the following two functors. Let
\begin{equation*}
\Pi  \colon A\gmod \rightarrow C\gmod
\end{equation*}
be the exact functor given by
\begin{equation*}
\Pi (M) = (n)A \otimes_A M,
\end{equation*}
which automatically extends to the derived categories. And let
\begin{equation*}
\mathrm{I} \colon \mc{D}^<(C\gmod) \rightarrow \mc{D}^<(A\gmod)
\end{equation*}
be the derived functor given by
\begin{equation*}
\mathrm{I}(M) =  A(n) \otimes_{C}^\mathbf{L} M.
\end{equation*}

Then define the categorified Jones-Wenzl projector to be
\begin{equation*}
\mf{P} =\mathrm{I} \circ \Pi \colon \mc{D}^<(A\gmod) \rightarrow \mc{D}^<(A\gmod).
\end{equation*}

In the next result, we construct a projective resolution of $(n)A$ as a graded $(C,A)$-bimodule (recall that unadorned tensor product $\otimes$ is always taken over the ground field).

\begin{prop}\label{projresofbigproj}
The following complex of $(C,A)$-bimodules
\begin{equation*}
\cdots \longrightarrow C \otimes (n)A \{ 2n+2 \} \stackrel{h_3}{\longrightarrow} C \otimes (n)A \{ 2n \}  \stackrel{h_2}{\longrightarrow}  C \otimes (n)A
\{ 2 \} \stackrel{h_1}{\longrightarrow}  C \otimes (n)A
 \stackrel{h_0}{\longrightarrow} (n)A
\end{equation*}
is exact, where the differentials are bimodule maps which are defined on the generators by
\begin{equation*}
h_k(1 \otimes (n))=
\begin{cases}
(n) & \text{ if } k=0 \\
x \otimes (n) - 1 \otimes (n|n-1|n)  & \text{ if } k=1,3,\ldots \\
\sum_{j=0}^{n-1}
x^j \otimes (n|n-1|n)^{n-1-j}  & \text{ if } k=2,4,\ldots.
\end{cases}
\end{equation*}
\end{prop}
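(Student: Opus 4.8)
The plan is to recognize this complex as the base change, along the module $(n)A$, of the classical $2$-periodic bimodule resolution of the truncated polynomial ring $C=\Bbbk[x]/(x^n)$; exactness then comes for free from the fact that that resolution splits over a suitable copy of $C$.

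First I would fix the algebraic setup. By $\END_A(A(n))\cong C$, recorded above, the ring $(n)A(n)$ is isomorphic to $C=\Bbbk[x]/(x^n)$; both are graded with one‑dimensional degree‑$2$ component, spanned by $x$ and by the loop $(n|n-1|n)$ respectively (each arrow of $A_n^!$ having degree $1$), so we may normalize the isomorphism so that $x$ corresponds to $(n|n-1|n)$. Thus $(n)A$ is a $(C,A)$‑bimodule whose left $C$‑action is left multiplication inside $(n)A(n)$, and $(n|n-1|n)^n=0$. I would then dispatch the elementary points: each $h_k$ is a well‑defined grading‑preserving homomorphism of $(C,A)$‑bimodules, the only constraint being that $h_k(1\otimes(n))$ be fixed by right multiplication by the idempotent $(n)$, which is visible from the formulas, the degree bookkeeping using $\deg x=\deg(n|n-1|n)=2$ and the displayed shifts. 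That $h_{k-1}\circ h_k=0$ follows by evaluating on $1\otimes(n)$: $h_0h_1$ gives $(n|n-1|n)-(n|n-1|n)=0$, while $h_1h_2$ and $h_2h_3$ — the only remaining patterns, by the $2$‑periodicity of the formulas — both telescope to $x^n\otimes(n)-1\otimes(n|n-1|n)^n=0$ by the two nilpotency relations. (This also falls out of the identification below.)

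For exactness I would argue as follows. Set $C^e=C\otimes_\Bbbk C\cong\Bbbk[x_1,x_2]/(x_1^n,x_2^n)$ and let $\mathbf{R}_\bullet\to C$ be the standard minimal free resolution of $C$ over $C^e$: every term is $C^e$, the differential is multiplication by $x_1-x_2$ in odd homological degrees and by $\sum_{i+j=n-1}x_1^ix_2^j$ in even positive degrees, and the augmentation is the multiplication map. Its exactness is classical — one checks inside $C^e$ that the annihilator of $x_1-x_2$ is generated by $\sum_{i+j=n-1}x_1^ix_2^j$, and symmetrically. Now regard $(n)A$ as a left $C$‑module via $C\cong(n)A(n)$ and form $\mathbf{R}_\bullet\otimes_{C^{(2)}}(n)A$, where $C^{(2)}$ is the copy of $C$ generated by $x_2$. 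Since $C^e$ is free over $C^{(2)}$ with basis $1,x_1,\dots,x_1^{n-1}$, there is a canonical isomorphism $C^e\otimes_{C^{(2)}}(n)A\cong C\otimes_\Bbbk(n)A$ of $(C,A)$‑bimodules, in which $x_1$ becomes the outer $C$‑variable and $x_2$ becomes left multiplication by $(n|n-1|n)$ on $(n)A$; tracing the differentials through this identification turns multiplication by $x_1-x_2$ into $h_1$ (hence $h_3,h_5,\dots$), multiplication by $\sum_{i+j=n-1}x_1^ix_2^j$ into $h_2$ (hence $h_4,h_6,\dots$), and the augmentation into $h_0$, while the homological shifts of $\mathbf{R}_\bullet$ reproduce exactly the shifts $\{2\},\{2n\},\{2n+2\},\dots$ in the statement. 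Finally, $\mathbf{R}_\bullet\to C$ is in particular a free resolution of $C$ \emph{viewed merely as a $C^{(2)}$‑module}, where $C$ is free of rank one; a free resolution of a projective module is split, so $\mathbf{R}_\bullet\to C$ is contractible as a complex of $C^{(2)}$‑modules and hence remains exact after applying $-\otimes_{C^{(2)}}(n)A$. This gives the asserted exactness.

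I expect the main obstacle to be purely organizational: one must pin down the two commuting $C$‑actions on $(n)A$ unambiguously (the left $(n)A(n)$‑action that plays the role of ``$x_2$'' versus the new outer ``$x_1$'' supplied by $\mathbf{R}_\bullet$), keep track of the ``$\op$'' implicit in $A\cong\END_{C_n}(\bigoplus_i G^i)$ so that the left versus right multiplications by $(n|n-1|n)$ occurring in the $h_k$ land exactly where the identification predicts, and line up the internal grading shifts of $\mathbf{R}_\bullet$ with the conventions $\deg x=2$ and $\deg(\mathrm{arrow})=1$. The footnote above, on conjugating the differentials by the arrow‑reversing involution of $A_n^!$, is precisely the remark that any residual left/right ambiguity is harmless. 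A reader wanting a hands‑on route may instead verify directly that $\mathbf{R}_\bullet\otimes_{C^{(2)}}(n)A$ is acyclic from a monomial basis of $C^e$; the splitting argument only repackages this computation.
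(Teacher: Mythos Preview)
Your argument is correct, and it is a tidier packaging of the same homological fact the paper uses. The paper first decomposes $(n)A$ as a left $C$-module into the cyclic summands $x^iC$ for $0\le i\le n-1$, verifies the $i=0$ case by hand (that is exactly your $\mathbf{R}_\bullet\to C$ written as $\Bbbk[x,y]/(x^n,y^n)$), and then for $i>0$ observes that the homology computes $\mathrm{Tor}^\bullet_C(C,C/(x^{n-i}))=0$. But this Tor-vanishing is precisely the statement that $\mathbf{R}_\bullet\to C$ is split over the inner copy $C^{(2)}$ of $C$, since $C$ is free of rank one over $C^{(2)}$; so the paper is implicitly invoking your splitting argument, only after an unnecessary preliminary decomposition of $(n)A$.

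What your route buys is that you tensor with all of $(n)A$ at once and never need to identify the summands or handle $i=0$ and $i>0$ separately: contractibility over $C^{(2)}$ gives exactness against any $C^{(2)}$-module in a single stroke. What the paper's route buys is a slightly more concrete picture of how the individual $C$-summands of $(n)A$ sit inside the complex, which is harmless but not needed for the proposition. Your caution about left/right bookkeeping is well placed but, as you note, the identification $x\leftrightarrow(n|n-1|n)$ via $C\cong(n)A(n)$ resolves it without any real ambiguity.
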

\begin{proof} It is clear that the square of the differential is zero. Thus it suffices to show that the complex is exact.

To do so, notice that, as a left module over $C$, $(n)A$ decomposes, up to grading shifts, into a direct sum of ideals
\begin{equation}\label{eqn-C-summand-of-A(n)}
(n)A\cong C\oplus xC\oplus \cdots \oplus x^{n-1}C.
\end{equation}
Replacing $(n)A$ in each term of the complex by this decomposition, and we are reduced to showing that, for each $i\in \{0,1,\dots, n-1\}$, the complex
\begin{equation}\label{eqn-generali-term}
\cdots \longrightarrow C \otimes x^iC \{ 2n+2 \} \stackrel{h_3}{\longrightarrow} C \otimes x^iC \{ 2n \}  \stackrel{h_2}{\longrightarrow}  C \otimes x^iC
\{ 2 \} \stackrel{h_1}{\longrightarrow}  C \otimes x^iC
 \stackrel{h_0}{\longrightarrow} x^iC
\end{equation}
is exact. We will first prove it for $i=0$, in this case, the complex is isomorphic to
\begin{equation}\label{eqn-i-zero-term}
\cdots \longrightarrow \dfrac{\Bbbk[x,y]}{(x^n,y^n)} \{ 2n+2 \} \stackrel{h_3}{\longrightarrow} \dfrac{\Bbbk[x,y]}{(x^n,y^n)} \{ 2n \}  \stackrel{h_2}{\longrightarrow} \dfrac{\Bbbk[x,y]}{(x^n,y^n)}
\{ 2 \} \stackrel{h_1}{\longrightarrow}  \dfrac{\Bbbk[x,y]}{(x^n,y^n)}
 \stackrel{h_0}{\longrightarrow} \dfrac{\Bbbk[z]}{(z^n)},
\end{equation}
and the boundary maps are now identified with $h_0(x)=z$, $h_0(y)=z$, and
\[
h_k(1)=
\begin{cases}
x-y  & \text{ if } k=1,3,\ldots \\
\sum_{j=0}^{n-1}x^jy^{n-1-j} & \text{ if } k=2,4,\ldots.
\end{cases}
\]
The exactness of this complex is clear.

For $i>0$, notice that $x^iC\cong C/(x^{n-i})$. If one chops off the right most term of equation \eqref{eqn-generali-term}, that complex has higher homology groups isomorphic to $\mathrm{Tor}^\bullet_C(C,C/(x^{n-i}))=0$ ($\bullet > 0$), and the zeroth homology isomorphic to $C/(x^{n-i})$. The result follows.
\end{proof}

Now tensor the projective resolution in Proposition~\ref{projresofbigproj} on the left by $A(n)$ over $C$ to get a complex of $(A,A)$-bimodules quasi-isomorphic to the functor $\mf{P}$.

\begin{thm}
\begin{enumerate}
\item
The Jones-Wenzl functor $\mf{P}$ is quasi-isomorphic to the following complex of
$(A, A)$-bimodules
\begin{equation*}
\label{res2}
\cdots \rightarrow A(n) \otimes (n)A \{ 2n+2 \} \stackrel{g_3}{\longrightarrow}  A(n) \otimes (n)A \{ 2n \} \stackrel{g_2}{\longrightarrow}  A(n) \otimes (n)A \{ 2 \} \stackrel{g_1}{\longrightarrow}  A(n) \otimes (n)A
\end{equation*}
where
\begin{equation*}
g_i =
\begin{cases}
(n|n-1|n) \otimes 1 - 1 \otimes (n|n-1|n) & \text{ if i is odd} \\
\sum_{j=0}^{n-1} (n|n-1|n)^j \otimes (n|n-1|n)^{n-1-j} & \text{ if i is even}.
\end{cases}
\end{equation*}
\item The functor $\mf{P}$ is idempotent: $\mf{P} \circ \mf{P} \cong \mf{P}$.
\item The Temperley-Lieb functors $\mf{U}_i$, $i=1,\ldots,n-1$, annhilate the Jones-Wenzl projector:
$$ \mf{U}_i \circ \mf{P} \cong 0 \cong \mf{P} \circ \mf{U}_i .$$
\end{enumerate}
\end{thm}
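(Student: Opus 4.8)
The plan is to establish all three parts from the description $\mf{P} = \mathrm{I}\circ\Pi$ together with the bimodule resolution of Proposition~\ref{projresofbigproj}. For part (1), the key structural observation is that $(n)A$ is a direct summand of $A$ as a right $A$-module, so $\Pi = (n)A\otimes_A(-)$ is exact and $\mf{P}(M) = \mathrm{I}(\Pi(M)) \cong \bigl(A(n)\otimes_C^{\mathbf{L}}(n)A\bigr)\otimes_A M$ for every $M$; it therefore suffices to represent the $(A,A)$-bimodule $A(n)\otimes_C^{\mathbf{L}}(n)A$ by a complex of bimodules that are projective on both sides. Since each term $P_k = C\otimes(n)A\{\ast\}$ of the resolution is free as a left $C$-module, $A(n)\otimes_C P_\bullet$ computes $A(n)\otimes_C^{\mathbf{L}}(n)A$. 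Under the canonical isomorphism $A(n)\otimes_C(C\otimes(n)A)\cong A(n)\otimes(n)A$ and the identification $C\cong\END_A(A(n))$ (so that $x\in C$ acts on $A(n)$ by right multiplication by the length-two path $(n|n-1|n)$), a direct check shows that $h_k$ is carried exactly to the map $g_k$ in the statement: on generators, $(n)\otimes(n)\mapsto (n|n-1|n)\otimes(n) - (n)\otimes(n|n-1|n)$ for $k$ odd, and similarly for $k$ even. Finally, each $A(n)\otimes(n)A$ is a finite direct sum of grading shifts of $A(n)$ as a left module and of $(n)A$ as a right module, hence projective on both sides, and the complex is bounded below in homological degree, so $(-)\otimes_A M$ applied to it computes the derived tensor product and the complex genuinely represents $\mf{P}$ as an endofunctor.

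For part (2), the only input is the identification of $(C,C)$-bimodules $(n)A\otimes_A A(n)\cong (n)A(n)\cong C$, which is the fact $\END_A(A(n))\cong C$ noted just above. Representing $N\in\mc{D}^<(C\gmod)$ by a complex $Q_\bullet$ of projective $C$-modules and using exactness of $\Pi$, one gets a natural isomorphism $\Pi(\mathrm{I}(N)) = (n)A\otimes_A A(n)\otimes_C Q_\bullet \cong C\otimes_C Q_\bullet = Q_\bullet \simeq N$, i.e.\ $\Pi\circ\mathrm{I}\cong\Id$. Hence $\mf{P}\circ\mf{P} = \mathrm{I}\circ(\Pi\circ\mathrm{I})\circ\Pi \cong \mathrm{I}\circ\Pi = \mf{P}$.

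For part (3), I would use the two elementary vanishings $(n)A\otimes_A L_i = 0$ and ${}_iL\otimes_A A(n) = 0$ for $i=1,\dots,n-1$, both immediate from $(n)(i)=0=(i)(n)$ in $A$ (recall $i\neq n$). For the left composite, $A(n)$ is projective as a left $A$-module, so ${}_iL\otimes_A^{\mathbf{L}}A(n) = {}_iL\otimes_A A(n) = 0$, whence ${}_iL\otimes_A^{\mathbf{L}}\mathrm{I}(\Pi(M))\cong\bigl({}_iL\otimes_A^{\mathbf{L}}A(n)\bigr)\otimes_C^{\mathbf{L}}\Pi(M) = 0$ and $\mf{U}_i\circ\mf{P}\cong 0$. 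For the right composite, exactness of $\Pi$ gives $\Pi(\mf{U}_i(M)) = \bigl((n)A\otimes_A L_i\bigr)\otimes\bigl({}_iL\otimes_A^{\mathbf{L}}M\bigr)[-1]\{-1\} = 0$, so $\mf{P}\circ\mf{U}_i = \mathrm{I}\circ\Pi\circ\mf{U}_i\cong 0$.

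The main obstacle is the bookkeeping in part (1): making the right $C$-action on $A(n)$ explicit enough to see that $A(n)\otimes_C(-)$ sends $h_k$ to $g_k$, and checking the two-sided projectivity and boundedness needed for the resulting complex of bimodules to represent the derived functor $\mf{P}$ (rather than merely to agree with it on objects). Parts (2) and (3) are then essentially formal, resting on $\END_A(A(n))\cong C$, the relations $(n)(i)=0$, and the exactness of $\Pi$; one should of course verify that the boundedness conventions implicit in $\mc{D}^<$ are compatible with all the derived tensor products used, but this is routine.
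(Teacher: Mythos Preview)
Your proposal is correct and follows essentially the same route as the paper. Part~(1) is obtained exactly as you do, by applying $A(n)\otimes_C(-)$ to the $(C,A)$-bimodule resolution of Proposition~\ref{projresofbigproj} and identifying the action of $x$ on $A(n)$ with right multiplication by $(n|n-1|n)$; the paper simply says this is ``clear from the discussion above,'' while you supply the extra bookkeeping about two-sided projectivity. For part~(2) the paper argues directly with the chain of derived tensor products
\[
A(n)\otimes_C^{\mathbf{L}}(n)A\otimes_A A(n)\otimes_C^{\mathbf{L}}(n)A
\;\cong\;
A(n)\otimes_C^{\mathbf{L}}(n)A(n)\otimes_C^{\mathbf{L}}(n)A
\;\cong\;
A(n)\otimes_C^{\mathbf{L}}(n)A,
\]
which rests on the same identification $(n)A\otimes_A A(n)\cong (n)A(n)\cong C$ that you use; your formulation via $\Pi\circ\mathrm{I}\cong\Id$ is an equivalent repackaging. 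Part~(3) is identical in both: the vanishings ${}_iL\otimes_A^{\mathbf{L}}A(n)=0$ and $(n)A\otimes_A^{\mathbf{L}}L_i=0$ for $i\neq n$ kill the composite on either side.
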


\begin{proof}
The first statement is clear from the discussion above.

The second statement follows almost directly from the definition of the categorified Jones-Wenzl projector and associativity of (derived) tensor products. The tensor product of the bimodules representing $\mf{P}$ is given by
\begin{align*}
 A(n) \otimes^\mathbf{L}_C (n)A \otimes_A A(n) \otimes^\mathbf{L}_C (n)A  \cong A(n) \otimes^\mathbf{L}_C (n)A(n) \otimes^\mathbf{L}_C (n) A \cong A(n) \otimes^\mathbf{L}_C (n)A .
\end{align*}
The result follows.

The third part is a direct consequence of the first part since
\begin{equation*}
{}_i L \otimes_A^\mathbf{L} A(n) \cong (n)A \otimes_A^\mathbf{L} L_i=0
\end{equation*}
for $i=1,\ldots,n-1$.
\end{proof}

As a $2$-representation of Lauda's category this complex may be rewritten as
\begin{equation*}
\left(\cdots \stackrel{f_4}{\longrightarrow} \mc{EF} \{ 2n+2 \} \stackrel{f_3}{\longrightarrow} \mc{EF} \{ 2n \} \stackrel{f_2}{\longrightarrow} \mc{EF} \{ 2 \} \stackrel{f_1}{\longrightarrow} \mc{EF}\right) \{ n-1 \}
\end{equation*}
where
\begin{equation*}
f_i =
\begin{cases}
x \otimes 1 - 1 \otimes y & \text{ if $i$ is odd} \\
\sum_{j=0}^{n-1} x^j \otimes y^{n-1-j} & \text{ if $i$ is even}
\end{cases}
\end{equation*}
and $x$ and $y$ represent the degree-two endomorphisms of $\mc{E}$ and $\mc{F}$ respectively. They were diagrammatically represented by a dot on an upward pointing and downward pointing strand respectively in $\mathcal{U}$ (see Section \ref{sec-cat-sl(2)}).

\begin{cor}
The equality
\begin{equation*}
[\mf{P}]=\frac{q^{n-1}(1-q^2)}{1-q^{2n}}EF=\frac{1}{[n]}EF.
\end{equation*}
holds in the usual Grothendieck group of $\mathcal{U}$. \hfill$\square$
\end{cor}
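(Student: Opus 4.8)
The plan is to read off $[\mf{P}]$ as the Euler characteristic of the explicit one-sided resolution of $\mf{P}$ by $1$-morphisms established in the theorem just above, using that passing to $K_0$ converts a grading shift $\{m\}$ into multiplication by $q^m$ and, by the categorification of the idempotented quantum group (Khovanov--Lauda, Lauda), identifies the class $[\mc{EF}]$ with the element $EF$. First I would record the grading shifts occurring in
\[
\mf{P}\;\simeq\;\left(\cdots \xrightarrow{f_4} \mc{EF}\{2n+2\} \xrightarrow{f_3} \mc{EF}\{2n\} \xrightarrow{f_2} \mc{EF}\{2\} \xrightarrow{f_1} \mc{EF}\right)\{n-1\}:
\]
absorbing the global shift $\{n-1\}$, the term in homological degree $2k$ is $\mc{EF}\{2kn+n-1\}$ and the term in degree $2k+1$ is $\mc{EF}\{2kn+n+1\}$. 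Hence each term contributes $\pm q^{(\cdots)}\,EF$ to the Grothendieck class.

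Next I would take the alternating sum. The complex is bounded on the right (it terminates at $\mc{EF}$) and its internal grading shifts tend to $+\infty$, so its class is well defined in the $q$-adically completed Grothendieck group and equals the termwise alternating sum. Summing the two geometric series,
\[
[\mf{P}] \;=\; q^{n-1}\Bigl(\sum_{k\ge 0} q^{2kn} \;-\; \sum_{k\ge 0} q^{2kn+2}\Bigr)\,EF \;=\; \frac{q^{n-1}(1-q^2)}{1-q^{2n}}\,EF .
\]
A one-line manipulation of quantum integers, $\frac{q^{n-1}(1-q^2)}{1-q^{2n}} = \frac{q-q^{-1}}{q^{n}-q^{-n}} = \frac{1}{[n]}$, then yields the stated formula.

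The only step that is not purely formal --- and hence the main obstacle --- is the second one: one must work in the appropriate completion of $K_0(\mathcal U)$ (over $\Z[[q]][q^{-1}]$, equivalently $\Z((q))$) in which a complex that is infinite in one homological direction but whose grading shifts escape to infinity acquires a well-defined, additively computable class. Granting this standard fact about the Grothendieck groups attached to the Khovanov--Lauda $2$-categories, the remainder of the argument is just the identification $[\mc{EF}] = EF$ and the elementary geometric series computation above.
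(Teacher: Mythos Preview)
Your argument is correct and is precisely the computation the paper has in mind: the corollary is stated with a trailing $\square$ and no proof, because it is immediate from the explicit resolution of $\mf{P}$ in the preceding theorem by reading off the alternating sum of grading shifts. Your identification of the shifts (degree $2k$ term $\mc{EF}\{2kn+n-1\}$, degree $2k+1$ term $\mc{EF}\{2kn+n+1\}$), the resulting geometric series, and the simplification to $1/[n]$ are all correct, and your caveat about working in the $q$-adic completion is the right one.
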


\subsection{Categorification at a prime root of unity.}
In the presence of $\partial$ we occasionally expect that the categorified Jones-Wenzl projector is quasi-isomorphic to a finite complex of $(A,A)$-bimodules. On the decategorified level this expectation comes from the observation that for certain values of $p$ the quantity $\frac{1}{[n]}$ is actually quite simple. 
For instance, let us write $n-1=kp+r$ with $0\leq r \leq p-1$. If $p$ divides $n-1$, then we have $[n]=(-1)^k$.
More generally, if $p$ does not divde $n-1$, $[n]$ is always a unit in $\mathbb{O}_p$. On the level of categories, we will see that, whenever $p$ divides $n-1$, one can construct a finite cell bimodule representing $\mf{P}$. Furthermore, for a fixed $n$, these are precisely all the prime characteristics for which this happens.
Indeed we obtain the following theorem:

\begin{thm}\label{thm-pdgjw}
Let $n$ be a positive integer, and denote by $A=A_n^!$ and $C=C_n$ the $p$-DG algebras which depend on $n$. 
\begin{enumerate}
\item[(i)] If $n-1$ is divisible by $p$, then the $p$-DG bimodule over $(A,A)$ representing $\mf{P}$ is quasi-isomorphic to a finite cell bimodule.
\item[(ii)] If $n-1$ is not divisible by $p$, then $\mf{P}$ can not be represented by a finite cell $p$-DG module over $(A,A)$.
\end{enumerate}
\end{thm}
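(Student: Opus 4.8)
The plan is to treat the two parts by a common mechanism: analyze the $p$-DG structure on the infinite complex
\[
P^\bullet = \left(\cdots \xrightarrow{g_3} A(n)\otimes(n)A\{2n\}\xrightarrow{g_2} A(n)\otimes(n)A\{2\}\xrightarrow{g_1} A(n)\otimes(n)A\right)
\]
representing $\mf{P}$, and decide when it is $p$-DG homotopy equivalent (equivalently, quasi-isomorphic, since all terms are direct sums of $p$-DG summands of $A\otimes A^{\op}$, hence property-(P)) to something finite cell. The key technical input is that the whole complex is built out of a single $(C,C)$-bimodule factor $A(n)\otimes_C^{\mathbf L}(n)A$, and that $\mf{P}$ is idempotent, so any finite cell replacement is highly constrained: a finite cell bimodule representing an idempotent nonzero functor whose Grothendieck class is $\frac{1}{[n]}EF$ must have graded Euler characteristic equal to $\frac{1}{[n]}$ times the class of $\mc{EF}$ in $K_0(\mathcal{D}(A))$.

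For part (i), when $p\mid n-1$, first I would observe that the even-indexed differential $g_{2k}=\sum_{j=0}^{n-1}(n|n-1|n)^j\otimes(n|n-1|n)^{n-1-j}$ becomes, after passing to the $\mc{EF}$-description, the multiplication-by-$\sum_{j=0}^{n-1}x^jy^{n-1-j}$ map, and that modulo the differential $\dif$ (which acts by $\dif(x)=x^2$, $\dif(y)=y^2$ up to the conventions in Section~\ref{sec-cat-sl(2)}) the relevant $p$-complex on $\Bbbk[x]/(x^n)$ — namely the degree-$(n-1)$ symmetric-function expression — is exactly the kind of element that is $\dif^{p-1}$-closed precisely when $p\mid n-1$. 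Concretely, $\sum_j x^j y^{n-1-j}$ is (up to sign) $h_{n-1}(x,y)$, the complete homogeneous symmetric function, and the condition $p\mid n-1$ makes a certain truncation of the complex contractible from some point on: the tail $\cdots\to\mc{EF}\{2n+2\}\to\mc{EF}\{2n\}\to\mc{EF}\{2\}$ past the first two terms should admit a $p$-DG contracting homotopy, built by an explicit $A$-bimodule map of degree $2-2p$ using the Koszul-type null-homotopy $\sum_{i=0}^{p-1}\dif^i h\dif^{p-1-i}$. I would then exhibit the surviving finite piece (the cokernel of $g_1$, i.e. essentially $(n)A$ viewed appropriately, together with one or two correction terms) as a finite cell bimodule, using the bar-resolution machinery from \eqref{eqn-bar-resolution} to replace it by a property-(P) module that is finitely generated. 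The honest construction of the contracting homotopy is the part that needs real computation, but its shape is dictated by the $i=0$ model complex $\Bbbk[x,y]/(x^n,y^n)$ from Proposition~\ref{projresofbigproj}.

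For part (ii), the strategy is a $K_0$-obstruction argument. Suppose $\mf{P}$ were represented by a finite cell $p$-DG bimodule $M$ over $(A,A)$. Then $[M]\in K_0(\mathcal{D}(A\otimes A^{\op}))$, a free $\mathbb{O}_p$-module, and compatibility with the functor $\mf{P}=\mathrm I\circ\Pi$ forces $[M]$ to equal the class of $A(n)\otimes_C^{\mathbf L}(n)A$. Applying the derived tensor identity $\mf{P}\circ\mf{P}\cong\mf{P}$ at the level of Grothendieck groups and tracking the factor $\End_C(A(n))\cong C$, the multiplicity with which $C$ appears is governed by $\frac{1}{[n]}$, and when $p\nmid n-1$ this is a genuine non-integer, non-unit–times–basis-element in the sense that it cannot be the Euler characteristic of a finite cell module: finite cell modules have classes in the image of the map from finitely generated projective $p$-DG modules, and the relevant computation (using Theorem~\ref{thm-p-DG-NH}-style identification of $K_0(\mathcal{D}(\nh_n))$, together with acyclicity of $\nh_n$ for $n\ge p$) shows the class of a genuine finite cell bimodule representing $\mf{P}$ would have to lie in a proper $\mathbb{O}_p$-submodule that $\frac{1}{[n]}[\mc{EF}]$ misses. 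The cleanest route is probably: compute $[\mf{P}(L_j)]$ for the simples $L_j$ and check that $\sum_j[\mf{P}(L_j)]$ has a denominator $[n]$ that cannot be cleared inside $\mathbb{O}_p$ unless $[n]$ is a unit, i.e. unless $p\nmid n-1$ fails — wait, that is exactly the condition, so I would instead phrase it as: $[n]$ is a unit iff $p\nmid n-1$, and when it is a unit $\mf{P}$ is still infinite because the complex $P^\bullet$ is genuinely unbounded with no $\dif$-equivariant contraction of any tail; the finiteness obstruction is therefore not about units in $K_0$ but about a secondary invariant — the $p$-DG Euler characteristic refined by the $\mathbb{O}_p$-module structure — which detects that the alternating sum of the $\{2k\}$-shifted copies of $\mc{EF}$ converges in $\mathbb{O}_p$ to $\frac{1}{[n]}[\mc{EF}]$ only, and that this class is realized by a finite cell module exactly when the tail telescopes, which is exactly the $p\mid n-1$ case analyzed in part (i).

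\textbf{Main obstacle.} The hard part will be part (i): producing an \emph{explicit, $\dif$-equivariant} contracting homotopy for the tail of $P^\bullet$ when $p\mid n-1$, and then massaging the surviving finite piece into something visibly finite cell (property (P) plus finitely generated) rather than merely finite-dimensional. Getting the null-homotopy to satisfy $f=\sum_{i=0}^{p-1}\dif^i\circ h\circ\dif^{p-1-i}$ on the nose — not just up to lower-order terms — is where the condition $p\mid n-1$ must be used in an essential, non-negotiable way, and verifying that no such homotopy exists when $p\nmid n-1$ (the content of part (ii)) is the delicate converse that the $K_0$-style obstruction is designed to make rigorous.
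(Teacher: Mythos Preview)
Your approach to part (i) would likely work but is far more laborious than the paper's. The paper does not touch the infinite complex $P^\bullet$ at all: it simply observes that when $n=kp+1$ the ideal $(x)\subset C=\Bbbk[x]/(x^n)$ is an acyclic $p$-complex (it breaks into length-$p$ Jordan blocks), so the augmentation $C\to\Bbbk$ is a quasi-isomorphism of $p$-DG algebras. Then $C\otimes(n)A\to(n)A$ is already a property-(P) replacement of $(n)A$ as a $(C,A)$-bimodule, and $\mf{P}\cong A(n)\otimes_C\bigl(C\otimes(n)A\bigr)\cong A(n)\otimes(n)A$ is visibly finite cell. No contracting homotopy on $P^\bullet$ is ever built. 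Your proposed route---constructing an explicit $\dif$-equivariant null-homotopy of the tail---would have to reproduce this conclusion by a much longer computation, and you correctly flag that computation as the hard part; the paper bypasses it entirely.

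Part (ii) has a genuine gap. A $K_0$-obstruction cannot succeed here, and your own proposal notices this and then dissolves into hand-waving (``a secondary invariant\dots which detects\dots''). The problem is concrete: for many $n$ with $p\nmid n-1$ (indeed whenever additionally $p\nmid n$), the quantum integer $[n]$ \emph{is} a unit in $\mathbb{O}_p$, so $\tfrac{1}{[n]}[\mc{EF}]$ is a perfectly good element of the Grothendieck group and there is no numerical obstruction to it being the class of a finite cell module. Grothendieck-group arguments see only Euler characteristics, and those cannot distinguish a finite cell module from an infinite one with the same class.

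The paper's argument for (ii) is of a completely different nature: it evaluates the functor on a test object. One computes $\mf{P}(L_n)\cong A(n)\otimes_C^{\mathbf L}\Bbbk$, builds an explicit property-(P) resolution of $\Bbbk$ over $C$ (an augmented bar construction whose subquotients are $C\otimes I^{\otimes k}$, repeated $p{-}1$ times via a lemma on converting exact sequences to acyclic filtered $p$-DG modules), tensors with $A(n)$, and then uses the $p$-DG direct summand $x^{n-1}C\cong\Bbbk\subset A(n)$ to split off an infinite $p$-complex with zero connecting maps and hence infinite cohomology. Since a finite cell $(A,A)$-bimodule applied to the finite-dimensional module $L_n$ would yield a finite-dimensional $p$-DG module (finite cohomology), this is a contradiction. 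The obstruction is cohomological, not $K$-theoretic; your proposal never gets near this mechanism.
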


\begin{proof}[Proof of Part (i)]
To prove the first statement we may explicitly construct a resolution of $\mf{P}$.
In this case, $n=kp+1$ for some $k\geq 0$, and the algebra $C=\Bbbk[x]/(x^{kp+1})$ is quasi-isomorphic to $ \Bbbk$
since
\begin{equation*}
\Bbbk(x \rightarrow x^2 \rightarrow \cdots \rightarrow x^{p}) \oplus
\Bbbk(x^{p+1} \rightarrow x^{p+2} \rightarrow \cdots \rightarrow x^{2p}) \oplus \cdots \oplus
\Bbbk(x^{(k-1)p+1} \rightarrow x^{(k-1)p+2} \rightarrow \cdots \rightarrow x^{kp})
\end{equation*}
is an acyclic ideal inside $C$. Thus, as a $p$-DG bimodule over $(C,A)$, we have a surjective quasi-isomorphism coming from the multiplication map of $C$ on the left module $(n)A$:
\[
C\otimes (n)A \stackrel{m}{\lra}  (n)A.
\]
It follows that the functor $\mf{P}$ is given by tensor product with the $p$-DG bimodule
\[
A(n)\otimes_C^{\mathbf{L}}(n)A \cong A(n)\otimes_C (C\otimes (n)A)\cong A(n)\otimes (n)A.
\]
Therefore
\begin{equation*}
\mf{P} \cong \left(A(n) \otimes (n)A \right) \otimes_A^{\mathbf{L}}(-) =\mc{EF}.
\end{equation*}

\end{proof}

It is a bit more difficult to prove the second statement in the Theorem. To do so we will make some preparations.

\begin{lem}\label{lemma-unbounded-complexes}Let $A$ be a $p$-DG algebra. Suppose
\[
\xymatrix{\cdots \ar[r]^-{\phi_{m+4}} & P_{m+3} \ar[r]^-{\phi_{m+3}} & P_{m+2}\ar[r]^-{\phi_{m+2}} & P_{m+1} \ar[r]^-{\phi_{m+1}}& P_m \ar[r]& 0}
\]
is a bounded-from-above exact sequence of $p$-DG modules over $A$. Then the filtered $p$-DG module below, with every other term repeated $(p-1)$-times, is acyclic:
\[
\xymatrix{\cdots \ar[r]^-{\phi_{m+4}} & P_{m+3} \ar@{=}[r]& \cdots \ar@{=}[r] & P_{m+3}\ar[r]^-{\phi_{m+3}} \ar[r] &
P_{m+2}\ar `r[rd] `_l `[llld] _-{\phi_{m+2}} `[d] [lld]
& \\
& & P_{m+1}\ar@{=}[r]&\cdots \ar@{=}[r]& P_{m+1} \ar[r]^-{\phi_{m+1}}& P_m \ar[r]& 0.}
\]
\end{lem}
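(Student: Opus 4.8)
The claim is essentially that the ``stretched'' complex obtained by repeating every other term $(p-1)$ extra times, with identity maps inserted between the copies, is an acyclic $p$-DG module. The natural strategy is to exhibit this stretched object as built out of elementary acyclic pieces via iterated extensions inside $A_\dif\!-\!\mathrm{mod}$, so that acyclicity of the whole follows from the two-out-of-three property for short exact sequences of $p$-DG modules (equivalently, that the subcategory of acyclics in $\mathcal{H}(A)$ is triangulated and closed under the relevant filtered colimits). First I would set up notation: group the stretched diagram into ``blocks'' consisting of a length-$p$ string $P_j \stackrel{=}{\to} \cdots \stackrel{=}{\to} P_j$ followed by the genuine map $\phi_j$ into the next block. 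The point is that each such string, viewed on its own as a $p$-DG module, is the total object of an $(p-1)$-fold mapping cone of identity maps, hence contractible; but the blocks are linked by the $\phi_j$'s, so one cannot simply split them off, and the bookkeeping must be done globally.

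The cleanest execution, I expect, is the \emph{Koszul-type / telescope} argument: reinterpret the stretched complex as $\mathbf{p}$-style resolution data. Concretely, the original bounded-above exact sequence $\cdots \to P_{m+1}\to P_m \to 0$ has, as its total complex with the internal $p$-differentials, a $p$-DG module $P_\bullet$ which is acyclic (this is the hypothesis, repackaged: an exact sequence, viewed with the homological grading shifts $\{2i\}$ built in, gives an acyclic $p$-DG module). The operation of ``repeating every other term $(p-1)$ times and inserting identities'' is precisely tensoring the two-term portions with the standard acyclic $p$-complex $\Bbbk[\dif]/(\dif^p)$ in the appropriate way, or equivalently replacing each alternate arrow $\phi_j$ by the $p$-fold iterate $\phi_j \circ 1 \circ \cdots \circ 1$. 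So the plan is: (1) show the stretched diagram, equipped with the total differential combining the horizontal maps and the internal $\dif$, really is a $p$-DG module — i.e.\ the total differential squares (to the $p$-th power) to zero, which is where the ``repeat $(p-1)$ times'' count is exactly what is needed; (2) filter it by the blocks $F^{\bullet}$ coming from the truncations of the original sequence; (3) identify each subquotient $F^{\bullet}/F^{\bullet-1}$ with either a contractible string of identities (hence acyclic) or with a piece measuring the failure of exactness of the original sequence, which vanishes by hypothesis; (4) conclude acyclicity of the total object from acyclicity of the subquotients, using that acyclic $p$-DG modules are closed under extensions and under the (exhaustive, bounded-above) filtered colimits appearing here.

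I would expect step (1) — verifying that the total differential on the stretched diagram satisfies $D^p = 0$ — to be the one requiring genuine care rather than the one that is conceptually hard: the horizontal identity maps have degree $0$ while $\dif$ has degree $2$, the maps $\phi_j$ commute with $\dif$ only up to the structure of the diagram as a filtered $p$-DG module (compare the convention in Example \ref{eg-finite-cell}, where $a_0$ need not commute with $\dif$), and the combinatorics of how an identity arrow followed by a $\dif$-jump contributes to $D^p$ is exactly what forces the multiplicity $p-1$. Once the object is correctly set up as a $p$-DG module, the homological input is soft: everything reduces to the hypothesis that the original sequence is exact, repackaged as the statement that a certain two-term-at-a-time filtration has acyclic subquotients. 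The main genuine obstacle, then, is bookkeeping: making the filtration and the identification of subquotients precise enough that ``the stretched complex is acyclic'' is not merely plausible but follows from a clean induction on the (co)length of the bounded-above complex, with the base case being a single stretched arrow $P_{m+1}\xrightarrow{\phi_{m+1}} P_m$ with $\phi_{m+1}$ surjective and the higher terms handled by the inductive hypothesis applied to $\ker\phi_{m+1}$.
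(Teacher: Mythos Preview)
Your plan is correct in outline and close in spirit to the paper's argument, but it is considerably more elaborate than what is needed. The paper reduces immediately to the case of a short exact sequence $0\to P_{m+2}\xrightarrow{\phi_{m+2}} P_{m+1}\xrightarrow{\phi_{m+1}} P_m\to 0$, and then for the resulting stretched object exhibits a single, very concrete two-step filtration: the sub-$p$-complex is
\[
0\to \underbrace{\mathrm{Im}(\phi_{m+1})=\cdots=\mathrm{Im}(\phi_{m+1})}_{p-1}\xrightarrow{\;\cong\;}P_m\to 0,
\]
and the quotient is
\[
0\to P_{m+2}\xrightarrow{\;\cong\;}\underbrace{\mathrm{Ker}(\phi_{m+1})=\cdots=\mathrm{Ker}(\phi_{m+1})}_{p-1}\to 0.
\]
Each of these is a length-$p$ string with all horizontal maps isomorphisms, hence a free $\Bbbk[\dif]/(\dif^p)$-module in the horizontal direction and therefore acyclic. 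That is the whole proof.

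Your inductive scheme (peel off $P_m$ together with $\mathrm{Im}(\phi_{m+1})$, replace the tail by the complex for $\ker\phi_{m+1}$) would reproduce exactly this, so your approach is not wrong, just packaged more abstractly. Two points where you overcomplicate: your step~(1) is a non-issue --- the stretched object is a filtered $p$-DG module in the sense of Example~\ref{eg-finite-cell}, so there is nothing to verify beyond the $\dif$-compatibility of each $\phi_j$ (which is given); and the ``Koszul/telescope'' and ``tensor with $\Bbbk[\dif]/(\dif^p)$'' reformulations, while suggestive, are not needed and make the combinatorics sound harder than it is. The key move you did not quite name explicitly is cutting along $\mathrm{Im}(\phi_{m+1})=\mathrm{Ker}(\phi_{m+2})$, which is what turns each piece into an honest length-$p$ contractible string.
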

\begin{proof}
The result reduces easily to the corresponding result for short exact sequences of $p$-DG modules. So let us assume that $P_{k}=0$ when $k\geq m+3$. We will analyze the $p$-complex structure of the filtered $p$-DG module obtained.

As a $p$-complex, it is easy to see that the filtered module\footnote{Here the filtration on the $p$-DG module is the natural one with respect to the differential: $P_m$ is a sub $p$-DG module that is the starting term, and the higher terms in the filtration are obtained by adding more terms to the left of $P_m$.} has as a subcomplex 
\[
0 \lra \underbrace{\mathrm{Im}(\phi_{m+1})= \cdots = \mathrm{Im}(\phi_{m+1})}_{(p-1)~\textrm{terms}}\stackrel{\cong}{\lra} P_m \lra 0.
\]
Modulo the subcomplex, the quotient is isomorphic to
\[
0 \lra P_{m+2}\stackrel{\cong}{\lra} \underbrace{\mathrm{Ker}(\phi_{m+1})= \cdots = \mathrm{Ker}(\phi_{m+1})}_{(p-1)~\textrm{terms}}\lra 0.
\]
Both complexes are obviously acyclic. The result follows.
\end{proof}

\begin{proof}[Proof of part (ii)] To prove the statement, we will show that $\mf{P}(L_n)$ has infinite $p$-cohomology.
Since $(n)A \otimes_A L_n \cong \Bbbk$,
we have $ \mf{P}(L_n) \cong A(n) \otimes_C^{\mathbf{L}} \Bbbk$.
We are reduced to finding a property-(P) (Defnition~\ref{def-finite-cell}) resolution for $\Bbbk$ as a left $p$-DG module over $C$.

Now we start constructing the resolution. The procedure is an augmented version of the construction of a bar resolution in \cite[Theorem 6.6]{QYHopf}, which will be done inductively.
 
 Consider the short exact sequence of $p$-DG modules over $C$
\begin{equation}\label{eqn-step-1}
0\lra I \lra C \lra  \Bbbk \lra 0.
\end{equation}
Here $I$ stands for the ideal generated by $x$, which is a $p$-DG submodule inside $C$.

The surjective multiplication map $C\otimes I \lra I$ is a map of $p$-DG modules. Denote its kernel by $I_2$, and it is easy to see that
\[
I_2= \{y\otimes z-1\otimes yz|y,~z\in I\}.
\]
As $p$-complexes, $C\otimes I \cong \Bbbk\otimes I \oplus I \otimes I$, and thus $I_2\cong I\otimes I$. Gluing this map to to the above short exact sequence, we get an exact sequence of left $p$-DG modules
\begin{equation}\label{eqn-step-2}
0 \lra I_2 \lra C\otimes I \lra C \lra \Bbbk \lra 0.
\end{equation}

Inductively, suppose we have constructed an exact sequence of $p$-DG modules
\begin{equation}\label{eqn-induction-step}
0 \lra I_{k-1} \lra C\otimes I_{k-2} \lra \cdots \lra C\otimes I \lra C \lra \Bbbk \lra 0
\end{equation}
where the arrow between each $C\otimes I_r\lra C\otimes I_{r-1}$ ($r=1,\dots, k-2$) is given by the multiplication of $C$ on a $p$-DG module; the kernel $I_{r+1}=\{y\otimes w -1\otimes y w|y\in I,~w \in I_{r}\}$ of the arrow, as a $p$-complex, is isomorphic to $I^{\otimes (r+1)}$. 

Then glue to the left most end of the sequence above the $C$ algebra action map
\begin{equation}\label{eqn-kernel-elts}
C\otimes I_{k-1}\lra I_{k-1}.
\end{equation}
by the discussion similar to the $k=2$ case, one sees that the kernel of the map is a $p$-DG submodule inside $C\otimes I_{k-1}$, which can be identified with
\[
I_{k}=\{y\otimes w -1\otimes y w |y\in I,~w \in I_{k-1}\},
\]
whose underlying $p$-complex is isomorphic to $I\otimes I_{k-1}\cong I^{\otimes k}$. 
This finishes the induction step, and we obtain a bounded-above sequence of $p$-DG modules ($I_1:=I$).
\begin{equation}
\label{eqn-step-infty}
\cdots \lra C\otimes I_k \lra C\otimes I_{k-1} \lra \cdots \lra C\otimes I_1 \lra C \lra \Bbbk\lra 0. 
\end{equation}

Now, applying Lemma \ref{lemma-unbounded-complexes} to the sequence \eqref{eqn-step-infty}, we obtain a desired filtered module which satisfies property (P) (Definition \ref{def-finite-cell}) and is quasi-isomorphic to $\Bbbk$:
\begin{equation}\label{eqn-filtered-property-P-mod}
\begin{gathered}
\xymatrix{\cdots \ar[r]^-{m} & C\otimes I_3 \ar@{=}[r]& \cdots \ar@{=}[r] & C\otimes I_3\ar[r]^-{m} \ar[r] &
C\otimes I_2 \ar `r[rd] `_l `[llld] _-{m} `[d] [lld]
& \\
& & C\otimes I_1 \ar@{=}[r]&\cdots \ar@{=}[r]& C\otimes I_1 \ar[r]^-{m}& C \ar[r]& 0,}
\end{gathered}
\end{equation}
where the maps $m$ indicate the algebra $C$ multiplication acting on a $C$-module.

It then follows that $\mf{P}(L_n)$ is quasi-isomorphic to the $p$-complex
\[
\xymatrix{\cdots \ar[r]^-{m} & A(n)\otimes I_3 \ar@{=}[r]& \cdots \ar@{=}[r] & A(n)\otimes I_3\ar[r]^-{m} \ar[r] &
A(n)\otimes I_2 \ar `r[rd] `_l `[llld] _-{m} `[d] [lld]
& \\
& & A(n)\otimes I_1 \ar@{=}[r]&\cdots \ar@{=}[r]& A(n)\otimes I_1 \ar[r]^-{m}& A(n) \ar[r]& 0.}
\]
As the summand decomposition \eqref{eqn-C-summand-of-A(n)} is also a decomposition of $p$-DG modules, we have that $A(n)$ contains $x^{n-1}C\cong \Bbbk \{2n-2 \}$ as a $p$-complex direct summand. Hence $\mf{P}(L_n)$ contains the infinite $p$-complex (up to an overall shift of $2n-2$)
\[
\xymatrix{\cdots \ar[r]^-{0} & I^{\otimes 3} \ar@{=}[r]& \cdots \ar@{=}[r] & I^{\otimes 3}\ar[r]^-{0} \ar[r] &
I^{\otimes 2} \ar `r[rd] `_l `[llld] _-{0} `[d] [lld]
& \\
& & I \ar@{=}[r]&\cdots \ar@{=}[r]& I \ar[r]^-{0}& \Bbbk \ar[r]& 0,}
\]
where the zero-maps come from the fact that the element $x^{n-1}$ kills the ideal $I$: Under the map $m$, we have, for any $y\otimes w -1\otimes y w \in I_k$, where $y\in I$ and $w \in I_{k-1}$,
\[
x^{n-1}\otimes_C (y\otimes w -1\otimes y w)=-x^{n-1}\otimes y w \mapsto 
-x^{n-1}y w=0.
\] 
The theorem now follows.
\end{proof}

\subsection{Two examples.} In Theorem ~\ref{thm-pdgjw}, when $n-1$ is not divisible by $p$, the number of copies of $C$ in each piece of the property-(P) replacement of $\Bbbk$ grows exponentially. However, there are two special cases where the bar resolution used can be replaced by a much smaller resolution: the number of copies of $C$ in each degree is almost a constant.

\begin{example}
We consider the case when $n=(k+1)p$ for some $k\in \N$. As before, first we take $C_0:=C\cdot 1_0$ and map it onto $\Bbbk$ by identifying $\Bbbk$ with the lowest degree subspace spanned by $1_0$. Next, we form the filtered complex (c.f.~Example~\ref{eg-finite-cell})
\[
\xymatrix{C1_1\ar[r]^-{x^{kp+1}} & C1_0}.
\]
Since $\dif^{p-1}(x^{kp+1})=0$, this is a rank-two finite cell module. 

We compute that
\begin{align*}
\dif^{p-1}(x^{kp+1}1_1)& =
\sum_{i=0}^{p-1}{p-1 \choose i}\dif^i(x^{kp+1})\dif^{p-i-1}(1_1)\\
& =\sum_{i=0}^{p-2}\dfrac{(p-1)!}{i!(p-1-i)!}i! x^{kp+1+i}\dif^{p-i-2}(x1_0)\\
& = \sum_{i=0}^{p-2}\dfrac{(p-1)!}{(p-1-i)}x^{(k+1)p}1_0.
\end{align*}
The second equality holds since $\dif^{p-1}(x^{kp+1})=0$. Now, as $i$ ranges over $\{0,\dots, p-2\}$, $1/(p-1-i)$ ranges over all non-zero elements of $\F_p$, therefore the sum in the last term is zero.

It follows that we may repeat the construction, and we obtain an infinite filtered module satisfying property (P) as follows:
\[
\xymatrix{
\cdots\ar[r]^-{x^{kp+1}} & C\{2(r+1)kp\} \ar[r]^-{x^{kp+1}} & C\{2rkp\}\ar[r]^-{x^{kp+1}} & \cdots \ar[r]^-{x^{kp+1}} & C\{2kp\}\ar[r]^-{x^{kp+1}} & C .
}
\]
This resolution of $\Bbbk$ as a left $p$-DG module has an interesting implication in the Grothendieck group $K_0(C)$, which says that
\[
[\Bbbk]= \sum_{r=0}^{\infty} q^{2rkp}[C].
\]
Since $q^{2p}=1$ in $\mathbb{O}_p$, this equation reduces to
\begin{equation}\label{eqn-symbol-identity}
[\Bbbk] = \sum_{r=0}^{\infty} [C].
\end{equation}
Although the identity $\sum_{r=0}^\infty 1 =1/(1-1)$ no longer makes sense, yet equation \eqref{eqn-symbol-identity} can be understood as
\[
[C]= [\Bbbk]+[I]=[\Bbbk]+\sum_{i=1}^{p-1}q^{2i}[\Bbbk]=[\Bbbk]-[\Bbbk]=0.
\]
\end{example}

\begin{example}
In this example we consider Theorem~\ref{thm-pdgjw} when $n=kp+2$ ($k\geq 0$), in which case the $p$-DG resolution of $\Bbbk$ over $C$ may also be simplified significantly.

For $j=0,\ldots,k-1$ there are contractible $p$-complexes of $C$ of the form
\begin{equation*}
(x^{jp+1} \rightarrow x^{jp+2} \rightarrow \cdots \rightarrow x^{(j+1)p}).
\end{equation*}
Contracting these $p$-complexes one establishes a quasi-isomorphism between $C$ and $\Bbbk[x^{kp+1}]/(x^{kp+1})^2$ with trivial differential.  

As a $p$-DG module over $C$, the trivial module $\Bbbk$ is quasi-isomorphic to

\begin{equation}
\begin{gathered}\label{Cresr=p-1}
\xymatrix{\cdots \ar[r]^-{x^{kp+1}} & C \{ 2(3kp-2p+4)   \} \ar@{=}[r]& \cdots \ar@{=}[r] & C \{ 2(3kp-p+2) \} \ar[r]^-{x^{kp+1}} \ar[r] &
C \{ 2(3kp-p+2) \} \ar `r[rd] `_l `[lllld] _-{ x^{kp+1}} `[d] [llld]
& \\
&  C \{ 2(kp-p+2)  \} \ar@{=}[r]&\cdots \ar@{=}[r]& C \{ 2kp \} \ar[r]^-{ x^{kp+1} }& C \ar[r]& 0}
\end{gathered}
\end{equation}
Similar as above, this equation has an interesting Grothendieck group implication saying that
\[
[\Bbbk]=\sum_{r=0}^{\infty}(-1)^r q^{2r}[C].
\]
Although, as before, $q^2$ is a root of unity in $\mathbb{O}_p$, and it does not make sense for $\sum_{r=0}^{\infty}(-1)^r q^{2r}$ to converge, this equation can, however, be ``explained'' via the equality of symbols
\[
[C]=[\Bbbk] +[I] =[\Bbbk]+q^{2(kp+1)}[\Bbbk]=(1+q^2)[\Bbbk].
\]

\end{example}

Both examples are instances of why one wants to restrict to \emph{compact objects} for defining the Grothendieck group of the triangulated category $\mc{D}(C)$ of a $p$-DG algebra. The module $\Bbbk$ in these cases are not compact in $\mc{D}(C)$. We refer the reader to \cite{EQ1} for a discussion about compact $p$-DG modules over a \emph{positive $p$-DG algebra}. This notion includes both $C$ and $A$ in this paper as special instances.

\vspace{0.2in}

\bibliographystyle{amsalpha}
\bibliography{qy-bib}

\end{document}